\begin{document}

\theoremstyle{definition}
\newtheorem{definition}{Definition}[section]
\newtheorem{theorem}[definition]{Theorem}
\newtheorem{example}[definition]{Example}
\newtheorem{remark}[definition]{Remark}
\newtheorem{lemma}[definition]{Lemma}
\newtheorem{proposition}[definition]{Proposition}
\newtheorem{corollary}[definition]{Corollary}
\newtheorem{conjecture}[definition]{Conjecture}
\newtheorem{case}{Case}
\newtheorem{subcase}{Subcase}[case]
\newtheorem{claim}{Claim}
\numberwithin{equation}{section}

\def\W{\mathcal{W}}
\def\V{\Vir}

\newcommand{\C}{\mathbb{C}}
\newcommand{\F}{\mathbb{F}}
\newcommand{\N}{\mathbb{N}}
\newcommand{\Z}{\mathbb{Z}}

\def\ad{\mathrm{ad\ }}
\def\diag{\mathrm{diag}}
\def\rank{\mathrm{rank }}
\def\spn{\mathrm{span}}
\def\Ind{\mathrm{Ind}}
\def\tr{\mathrm{tr}}
\def\Der{\mathrm{Der}}
\def\id{\mathrm{id}}

\def\bp{\mathbf{p}}
\def\bl{\mathbf{l}}
\def\fa{\mathfrak{a}}
\def\fe{\mathfrak{e}}
\def\fg{\mathfrak{g}}
\def\gl{\mathfrak{gl}}
\def\te{\tilde{e}}
\def\tf{\tilde{f}}
\def\tv{\widetilde{V}}
\def\tmu{\widetilde{\mu}}

\def\LL{\mathcal{L}}
\def\a{\alpha}
\def\b{\beta}
\def\d{\delta}
\def\D{\Delta}
\def\l{\lambda}

\def\p{\partial}
\def\ra{\rightarrow}

\def\gen#1{\langle #1 \rangle}

\def\HVir{\mathcal{L}}
\def\Vir{\mathrm{Vir}}

\def\BB{\mathcal{B}}

\def\li#1{\textcolor{red}{#1}}

\renewcommand{\labelenumi}{(\arabic{enumi}).}
\renewcommand{\labelenumii}{(\roman{enumii})}

\title[$\lambda$-Differential operators and modules for Virasoro algebra]{$\lambda$-Differential operators and $\lambda$-differential modules for the Virasoro algebra}

\author{Xuewen Liu}
\address{School of Mathematics and Statistics, Lanzhou
University, Lanzhou 730000, Gansu, China
and School of Mathematics and Statistics, Zhengzhou
University, Zhengzhou 450001, Henan, China}
\email{liuxw@zzu.edu.cn}

\author{Li Guo}
\address{Department of Mathematics and Computer Science, Rutgers
University, Newark, NJ 07102, USA}
\email{liguo@rutgers.edu}

\author{Xiangqian Guo}
\address{School of Mathematics and Statistics, Zhengzhou
University, Zhengzhou 450001, Henan, China}
\email{guoxq@zzu.edu.cn}


\begin{abstract} The concept of $\l$-differential operators is a natural generalization of
differential operators and difference operators. In this paper, we
determine the $\l$-differential Lie algebraic structure on the Witt
algebra and the Virasoro algebra for invertible $\l$. Then we
consider several families of modules over the Virasoro algebra with explicit module
actions and determine the $\l$-differential module structures on them.
\end{abstract}

\subjclass[2010]{17B10,17B68,15A04,17B40,15A69,17B65} 

\keywords{differential Lie algebra, differential module, Witt algebra, Virasoro algebra \\
Published as {\em Linear and Multilinear Algebra} {\bf 67} (7) (2019), 1308-1324
}

\maketitle

\vspace{-1cm}

\tableofcontents

\vspace{-1cm}

\section{Introduction}

The Virasoro algebra $\Vir$ is an infinite dimensional Lie algebra
over the complex numbers $\C$, with basis $\{L_n,C\,\, |\,\, n \in
\Z\}$ and defining relations
$$
[L_{m},L_{n}]=(n-m)L_{n+m}+\delta_{n,-m}\frac{m^{3}-m}{12}C, \quad
m,n \in \Z,
$$
$$
[C, L_{m}]=0, \quad m \in \Z.
$$
The quotient algebra of $\Vir$ modulo the center $\C C$ is just the (rank-$1$)
Witt algebra. The algebra $\Vir$ is one of the most important Lie
algebras both in mathematics and in mathematical physics. See for
example \cite{IK, KR} and references therein. In particular, it has
been widely used in quantum physics \cite{GO}, conformal field
theory \cite{DMS}, Kac-Moody algebras \cite{K,MoP} and vertex operator
algebras \cite{DMZ,FZ}.

The representation theory of the
Virasoro algebra has attracted a lot of attention from
mathematicians and physicists. There are many classical results on
the theory of Harish-Chandra modules (weight modules with finite
dimensional weight spaces, including highest weight modules and
modules of intermediate series), see \cite{KR, IK, FF, M}. Recently non-Harish-Chandra modules
are investigated extensively. For example, weight modules with infinite dimensional weight spaces
were studied in \cite{CGZ, CM, LLZ} and several new families of non-weight
modules (including Whittaker modules and their generalizations) were constructed and studied in \cite{CG, GLZ, GWL, LGZ, LZ, MW, MZ, OW, TZ1, TZ2}.

The study of differential associative algebras began with the algebraic approach of Ritt~\cite{Ri} to differential equations. Through the work of Kolchin and many other mathematicians, it has expanded into a vast area of research, including differential Galois theory and differential algebraic geometry, with broad applications in mathematics and physics~\cite{Kol,Sv}.
The algebraic abstraction of difference equations led to the concept of a difference algebra which has developed largely in parallel to the differential algebra~\cite{Co,Le}.
As a natural generalization of both the differential operator and
the difference operator, the concept of an $\l$-differential
operator, for any scalar $\lambda$, was introduced in~\cite{GK}.

Differential graded Lie algebras (dg-Lie algebras) have been studied for some time in connection with simplicial Lie algebras and $L_\infty$-algebras~\cite{Qu,KM}. Differential structures on Lie algebras have also been extensively studied in the contexts of ring of differential operators, $D$-modules, especially Weyl algebras~\cite{Bj,Bo}. These provides strong motivation for a systematic study of differential Lie algebras. In a recent paper~\cite{Po}, a Poincar\'e-Birkhoff-Witt theorem for differential Lie algebras was established.

In order to study differential Lie algebra and difference Lie algebra at the same time, in the present paper, we define the $\l$-differential operator for Lie algebras, obtaining $\l$-differential Lie algebras. For their representations, we also introduce $\l$-differential modules for the Lie algebra carrying a $\l$-differential operator.
Before a general study of these concepts, we focus on the concrete yet important examples of the Virasoro algebra and its modules.
More precisely, we study $\l$-differential operators on the Virasoro algebra (and on the Witt algebra). Then we study the corresponding representations by determining $\l$-differential modules on these $\l$-differential algebras.
In Section~\ref{sec:diffalg}, we determine the $\l$-differential Lie algebraic
structures on the Witt algebra and the Virasoro algebra. In Section~\ref{sec:diffmod}, we determine the $\l$-differential module structures for several classes of Virasoro modules with explicit module structures, including Verma modules, modules of intermediate series and several non-weight modules with explicit module structures.

Throughout this paper, we denote by $\Z$, $\Z_+$, $\N$ and $\C$ the
sets of all integers, nonnegative integers, positive integers and
complex numbers, respectively. All spaces and algebras are over $\C$
and all linear maps are $\C$-linear.

\section{$\l$-differential operators on the Virasoro algebra}
\label{sec:diffalg}

\begin{definition}\label{def-l-algebra}  Let $\LL$ be a Lie algebra and $\l\in\C$. A \textbf{$\lambda$-differential operator} on $\LL$ is a linear map
$d: \LL\rightarrow \LL$ that satisfies
\begin{equation}\label{def}
d[x,y]=[d(x),y]+[x,d(y)]+\lambda[d(x),d(y)],\ \forall\
x,y\in \LL.
\end{equation}
Then $(\LL, d)$ is called a \textbf{$\lambda$-differential Lie algebra}.
\end{definition}

Note that a $0$-differential operator is nothing but a derivation on
$L$. The $\l$-differential operator on an associative algebra~\cite{GK} is modeled on the operator $D_\l$ defined by $D_\l(f)(x)= (f(x+\l)-f(x))/\l$ in the definition of the derivative. Thus the usual derivation is the limit of the $\l$-differential operator when $\l$ approaches zero.

For any $\l\in\C^*=\C\setminus\{0\}$, $d$ is a $\l$-differential operator if and
only if $\l d$ is a $1$-differential operator. Therefore, to
determine all $\l$-differential operators with $\l\neq0$, we need
only to determine all $1$-differential operators. We next relate $1$-differential operator to a better known operator.

\begin{lemma}\label{1-diff-homo} Let $\LL$ be a Lie algebra. A linear map $d: \LL\rightarrow \LL$ is a $1$-differential operator if and
only if $d+\id_\LL$ is a Lie algebra homomorphism.
\end{lemma}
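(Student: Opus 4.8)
The plan is to prove the biconditional by directly comparing the defining identities of the two structures. Set $D := d + \id_\LL$, so that $d = D - \id_\LL$ and $d(x) = D(x) - x$ for all $x \in \LL$. The statement to establish is that the $1$-differential condition
\begin{equation*}
d[x,y] = [d(x),y] + [x,d(y)] + [d(x),d(y)]
\end{equation*}
is equivalent to $D$ being a Lie algebra homomorphism, i.e. $D[x,y] = [D(x),D(y)]$ for all $x,y$.

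First I would expand the right-hand side of the homomorphism condition by substituting $D(x) = d(x) + x$ and $D(y) = d(y) + y$ and using bilinearity of the bracket. This gives
\begin{equation*}
[D(x),D(y)] = [d(x)+x,\, d(y)+y] = [d(x),d(y)] + [d(x),y] + [x,d(y)] + [x,y].
\end{equation*}
On the other side, $D[x,y] = d[x,y] + [x,y]$. Subtracting the common term $[x,y]$ from both expressions, the equation $D[x,y] = [D(x),D(y)]$ becomes exactly
\begin{equation*}
d[x,y] = [d(x),d(y)] + [d(x),y] + [x,d(y)],
\end{equation*}
which is precisely the $1$-differential identity \eqref{def} with $\lambda = 1$. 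Since each step is an equivalence (bilinearity is an exact rewriting and cancelling $[x,y]$ is reversible), the two conditions hold simultaneously for a given pair $(x,y)$, and hence for all pairs.

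This argument is a short formal computation, so there is no serious obstacle; the only point requiring a little care is that $D = d + \id_\LL$ is automatically linear (being a sum of two linear maps), so that checking the bracket-compatibility identity for all $x,y$ genuinely captures the full content of $D$ being a Lie algebra homomorphism. I would conclude by noting that the equivalence holds termwise in $x$ and $y$, so no quantifier issues arise, completing both directions of the proof at once.
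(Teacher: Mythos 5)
Your proof is correct and is essentially the same as the paper's: both add or cancel the term $[x,y]$ and use bilinearity to identify $[d(x)+x,\,d(y)+y]$ with the right-hand side of the $1$-differential identity. No issues.
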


\begin{proof}
When $\l=1$, Eq.~(2.1) is equivalent to
$$d[x, y]+[x, y]=[d(x), y]+[x,d(y)]+[d(x), d(y)]+[x, y]=[d(x)+x, d(y)+y].$$
The lemma follows.
\end{proof}

An associative algebra equipped with an injective multiplicative linear operator is called a difference algebra~\cite{Co,Le}. In this context, the current paper can be regarded as a study of difference Lie algebras and their representations.

Let $\C[t^{\pm1}]$ be the associative algebra of Laurent polynomials in $t$. The Witt algebra $W$ is the derivation Lie algebra of $\C[t^{\pm1}]$, i.e.,
$$W=\Der(\C[t^{\pm1}])=\left\{f(t)\frac{d}{dt}\mid f(t)\in\C[t^{\pm1}]\right\},$$ and the Virasoro algebra $\Vir$ is just the universal central extension of $W$.
More precisely, the Lie bracket of the Witt algebra can be written as
$$\Big[t^{i+1}\frac{d}{dt},\ t^{j+1}\frac{d}{dt}\big]=(j-i)t^{i+j+1}\frac{d}{dt},\ \forall\ i,j\in\Z.$$
We have the following natural epimorphism of Lie algebras $\phi:\ \Vir\rightarrow W$ given by
$$\phi(C)=0,\quad \phi(L_i)=t^{i+1}\frac{d}{dt},\ \forall\ i\in\Z,$$
which realizes $\Vir$ as a universal central extension of $W$.

There are natural $\Z$-gradations on both $\Vir$ and $W$ given by
\begin{equation*}
\Vir=\bigoplus_{n\in\Z}\Vir_n\quad\text{and}\quad W=\bigoplus_{n\in\Z}W_n,
\end{equation*}
where $W_n=\C L_n$ and $\Vir_n=\C L_n\oplus\delta_{n,0}\C C$ for any $n\in \Z$.
These $\Z$-gradations are obviously inherited from the usual $\Z$-gradation of the Laurent polynomial
algebra $\C[t^{\pm1}]$ defined by $\deg(t)=1$, and the homomorphism $\phi$ in the last paragraph respects the gradations.
A $\l$-differential operator $d$ on $\Vir$ or $W$ is called a
\textbf{homogeneous $\l$-differential operator of degree $k$} if it satisfies
\begin{equation*}
d(\Vir_m)\subseteq \Vir_{m+k}\quad\text{or}\quad d(W_m)\subseteq W_{m+k},\ \forall\ m\in\Z.
\end{equation*}
For any $n\in\Z^*$ and $a\in\C^*$, we can define
the homomorphism $\varphi_n$ and $\tau_a$ as follows:
$$\varphi_n: \Vir\rightarrow \Vir,\ \varphi_n(L_i)=\frac{1}{n}(L_{ni}-\delta_{i,0}\frac{n^2-1}{24}C), \varphi_n(C)=nC$$
and
$$\tau_a: \Vir\rightarrow \Vir,\ \tau_a(L_i)=a^i L_i, \tau_a(C)=C.$$
It is clear that $\tau_a$ and $\varphi_{\pm1}$ are automorphisms on $\Vir$.
Furthermore, $\varphi_m\varphi_n=\varphi_{mn}$, $\tau_a\tau_b=\tau_{ab}$ and $\tau_a\varphi_n=\varphi_n\tau_{a^n}$.
It was shown in\cite{Z} that any nonzero homomorphism on the Virasoro algebra $\Vir$ is of the form
$\varphi_n\tau_a$ for some $n\in\Z^*$ and $a\in\C^*$. Thus by Lemma \ref{1-diff-homo}, we see that
$$d_{0,0}:=-\id_{\Vir}\quad \text{and} \quad d_{n,a}:=\varphi_n\tau_a-\id_{\Vir}, n\in\Z^*, a\in\C^*,$$
exhaust all $1$-differential operator, corresponding to the zero and nonzero homomorphisms on $\Vir$ respectively. Thus we have proved

\begin{theorem}\label{1-diff-Vir} Any $1$-differential operator on Virasoro algebra $\Vir$ is
of the form $d_{0,0}$ or $d_{n,a}$ for some $n\in\Z^*$ and $a\in\C^*$. For $\l\in\C^*$, any $\l$-differential operator on Virasoro algebra $\Vir$ is one
of $\l^{-1}d_{0,0}$ or $\l^{-1}d_{n,a}$ for some $n\in\Z^*$ and $a\in\C^*$.
\end{theorem}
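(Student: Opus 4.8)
The plan is to reduce the determination of all $1$-differential operators on $\Vir$ to the known classification of its Lie algebra homomorphisms, and then to pass to arbitrary invertible $\l$ by a rescaling.

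First I would invoke Lemma \ref{1-diff-homo}, which states that a linear map $d:\Vir\to\Vir$ is a $1$-differential operator if and only if $\psi:=d+\id_\Vir$ is a Lie algebra homomorphism $\Vir\to\Vir$. Since this is a biconditional, the assignment $d\mapsto d+\id_\Vir$ is a bijection from the set of $1$-differential operators onto the set of all endomorphisms of $\Vir$, with inverse $\psi\mapsto\psi-\id_\Vir$. Thus determining the former is equivalent to determining the latter.

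Next I would feed in the classification recalled just above the theorem (from \cite{Z}): the endomorphisms of $\Vir$ are exactly the zero map together with the maps $\varphi_n\tau_a$ for $n\in\Z^*$ and $a\in\C^*$. Transporting these across the bijection, the zero homomorphism corresponds to $d=-\id_\Vir=d_{0,0}$, while the nonzero homomorphism $\varphi_n\tau_a$ corresponds to $d=\varphi_n\tau_a-\id_\Vir=d_{n,a}$. Because the correspondence is a bijection onto all endomorphisms, this list is exhaustive, which establishes the first assertion.

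Finally, for $\l\in\C^*$ I would use the scaling remark preceding Lemma \ref{1-diff-homo}: $d$ is a $\l$-differential operator if and only if $\l d$ is a $1$-differential operator. Applying the case just settled to $\l d$ and then multiplying by $\l^{-1}$ gives $d=\l^{-1}d_{0,0}$ or $d=\l^{-1}d_{n,a}$, completing the proof. The entire argument is a routine assembly of three ingredients, so I expect no genuine obstacle in it; the only point demanding attention is to use the \emph{iff} in Lemma \ref{1-diff-homo} so that the resulting list is truly exhaustive rather than a mere supply of examples. The real difficulty, if one wished to prove the result from scratch, would be the classification of endomorphisms of $\Vir$ itself, which here is taken as input from \cite{Z}.
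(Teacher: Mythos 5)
Your proposal is correct and follows essentially the same route as the paper: the authors likewise combine the scaling remark, Lemma \ref{1-diff-homo}, and the classification of homomorphisms of $\Vir$ from \cite{Z} to conclude that $d_{0,0}$ and $d_{n,a}$ exhaust all $1$-differential operators. Your explicit emphasis on the bijectivity of $d\mapsto d+\id_\Vir$ (hence exhaustiveness of the list) is exactly the point implicit in the paper's argument.
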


\section{$\l$-Differential modules on the Virasoro algebra}
\label{sec:diffmod}

The representation theory of the Virasoro algebra has many applications in mathematical physics. In particular, the Harish-Chandra module theory has been studied extensively and intensively. Recently, many non-Harish-Chandra modules (including non-weight modules and weight modules with infinite dimensional weight spaces) have been constructed and studied by several authors, applying various methods and techniques; see, for example,\cite{CGZ, CM, LLZ, CG, LGZ, LZ, MW, MZ, OW}. In this section, we will determine the $\l$-differential module structures for most of these modules. This will provide several families of $\l$-differential modules over the Virasoro algebra, which may help us understand the concepts of $\l$-differential Lie algebras and  $\l$-differential modules.

Based on Theorem~\ref{1-diff-Vir}, we only need to consider differential modules over $(\Vir,d_{0,0})$ and over $(\Vir,d_{n,a})$ where $n\in \Z^*$ and $a\in \C^*$. The first case is easy to study and will be carried out in Section~\ref{ss:00} together with some general setup. The second case is considered in Section~\ref{ss:na} which is further divided into several types of modules.

\subsection{General setup and the case of $(\Vir,d_{0,0})$}
\label{ss:00}

Let us first recall the definition of a $\l$-differential module.

\begin{definition} Let $(\LL,d)$ be a $\l$-differential Lie algebra. An $\LL$-module $V$ is called a \textbf{$\l$-differential module} for $(\LL, d)$ if there is a linear
map $\delta: V\rightarrow V$ such that
\begin{equation}\label{def-l-mod}
\delta (xv)=d(x)v +x \delta(v) + \lambda d(x)
\delta(v), \forall\ x\in \LL, v\in V.
\end{equation}
We denote this module by $(V,\delta)$.
\end{definition}
When $\lambda=0$, such a $\delta$ is also called a connection in differential geometry.

\begin{lemma}\label{1-diff-mod} Let $(\LL,d)$ be a $1$-differential Lie algebra and $V$ be an $\LL$-module
with a linear map $\delta: V\rightarrow V$. Then $(V,\delta)$ is a $1$-differential $(\LL,d)$-module if and only if the following equation holds:
\begin{equation}\label{Delta-D}
\Delta(xv)=D(x)\Delta(v), \ \forall\ x\in\LL, v\in V,
\end{equation}
where $D=d+\id_\LL$ and $\D=\d+\id_V$.
\end{lemma}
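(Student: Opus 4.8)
The plan mirrors the proof of Lemma~\ref{1-diff-homo}. Since $(\LL,d)$ is a $1$-differential Lie algebra, being a $1$-differential module means $\lambda=1$ in \eqref{def-l-mod}, so the defining condition reads $\delta(xv)=d(x)v+x\delta(v)+d(x)\delta(v)$. I would prove the stated equivalence by directly expanding both sides of the claimed identity \eqref{Delta-D} in terms of the definitions $D=d+\id_\LL$ and $\D=\d+\id_V$, and checking that it collapses to exactly this condition.

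First I would expand the left-hand side: $\Delta(xv)=\delta(xv)+xv$. Next, observing that $D(x)=d(x)+x$ is an element of $\LL$ and $\Delta(v)=\delta(v)+v$ is an element of $V$, I would use the bilinearity of the module action $\LL\times V\to V$ to expand the right-hand side distributively as $D(x)\Delta(v)=d(x)\delta(v)+d(x)v+x\delta(v)+xv$. Equating the two expressions and cancelling the common term $xv$ yields precisely $\delta(xv)=d(x)v+x\delta(v)+d(x)\delta(v)$, which is \eqref{def-l-mod} specialized to $\lambda=1$. Because every step is a reversible algebraic identity, running the computation in either direction establishes both implications at once, giving the ``if and only if''.

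I expect no genuine obstacle here; the argument is a routine verification completely parallel to Lemma~\ref{1-diff-homo}, with the Lie bracket there replaced by the module action. The only point requiring a moment of care is the use of bilinearity of the module action when expanding the product $D(x)\Delta(v)$, which is what makes the four cross terms separate cleanly and the $xv$ terms cancel.
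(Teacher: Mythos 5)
Your proposal is correct and is essentially identical to the paper's own proof: the paper also adds $xv$ to both sides of the $\lambda=1$ case of Eq.~\eqref{def-l-mod} and uses bilinearity of the module action to factor the result as $(x+d(x))(v+\delta(v))=D(x)\Delta(v)$, with reversibility giving the equivalence. No gap; the step you flag (bilinearity of the action) is exactly the point the paper's one-line computation relies on.
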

\begin{proof}
When $\l=1$, Eq.~\eqref{def-l-mod} is equivalent to
$$\d(xv)+xv=d(x)v+x\d(v)+d(x)\d(v)+xv=(x+d(x))(v+\d(v)).$$
The lemma follows.
\end{proof}

Note that in all cases, $\delta=-\id_{V}$ always
define a $1$-differential module structure on an $\LL$-module $V$.
We call this a \textbf{trivial $1$-differential module structure}.

As in the case of $\l$-differential operators, we have the following simple relation between $\l$-differential modules and $1$-differential modules.

\begin{corollary}\label{l-diff-mod} Suppose $\l\neq 0$. Let $(\LL,d)$ be a $\l$-differential Lie algebra and $V$ be an $\LL$-module with a linear map $\delta: V\rightarrow V$. Then $(V,\delta)$ is a $\l$-differential $(\LL,d)$-module if and only if the following equation holds:
\begin{equation}\label{Delta-D2}
\Delta_\l(xv)=D_\l(x)\Delta_\l(v), \ \forall\ x\in\LL, v\in V,
\end{equation}
where $D_\l=\l d+\id_\LL$ and $\D_\l=\l\d+\id_V$.
\end{corollary}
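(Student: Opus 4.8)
The plan is to reduce this statement to the $\l=1$ case already established in Lemma~\ref{1-diff-mod}, exploiting the rescaling relation noted earlier in the paper: $d$ is a $\l$-differential operator if and only if $\l d$ is a $1$-differential operator. Accordingly, I would set $d':=\l d$, so that $(\LL,d')$ is a $1$-differential Lie algebra, and introduce the auxiliary map $\d':=\l\d$ on $V$.

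First I would verify that $(V,\d)$ is a $\l$-differential $(\LL,d)$-module if and only if $(V,\d')$ is a $1$-differential $(\LL,d')$-module. This is a direct substitution: writing out the defining equation \eqref{def-l-mod} in the case $\l=1$ for $(V,\d')$ over $(\LL,d')$ gives
$$\l\d(xv)=\l d(x)v+x\,\l\d(v)+\l d(x)\,\l\d(v),$$
and cancelling a common factor of $\l$---the only point where the hypothesis $\l\neq 0$ is used---recovers precisely the defining equation for a $\l$-differential $(\LL,d)$-module.

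Next I would apply Lemma~\ref{1-diff-mod} to the $1$-differential Lie algebra $(\LL,d')$ and the map $\d'$. That lemma tells us $(V,\d')$ is a $1$-differential module if and only if $\D'(xv)=D'(x)\D'(v)$ for all $x\in\LL$ and $v\in V$, where $D'=d'+\id_\LL$ and $\D'=\d'+\id_V$. Substituting $d'=\l d$ and $\d'=\l\d$ gives $D'=\l d+\id_\LL=D_\l$ and $\D'=\l\d+\id_V=\D_\l$, so this condition is exactly \eqref{Delta-D2}. Chaining the two equivalences completes the proof.

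There is essentially no hard step here; the only thing to be careful about is that the reduction depends on dividing by $\l$, which is exactly why $\l\neq 0$ is assumed. Alternatively, one could bypass the reduction and argue by a one-line direct computation: expand $D_\l(x)\D_\l(v)=(\l d(x)+x)(\l\d(v)+v)$ using the module action, compare with $\D_\l(xv)=\l\d(xv)+xv$, cancel the common term $xv$ and divide by $\l$ to recover \eqref{def-l-mod}. This makes the role of $\l\neq 0$ equally transparent and mirrors the computation already used in Lemma~\ref{1-diff-mod}.
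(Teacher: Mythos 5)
Your proposal is correct and takes essentially the same route the paper intends: the corollary is stated there without proof, as an immediate consequence of Lemma~\ref{1-diff-mod} via the rescaling $d\mapsto \l d$, $\d\mapsto \l\d$ (``as in the case of $\l$-differential operators''), which is exactly your reduction. Your alternative direct expansion of $D_\l(x)\D_\l(v)$ is likewise just the computation in the proof of Lemma~\ref{1-diff-mod} with the factors of $\l$ carried along, so both versions are fine.
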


Now we are ready to determine the $1$-differential module structures for the Virasoro modules. According to Theorem~\ref{1-diff-Vir}, we only need to consider this question for the $1$-differential algebra
$(\Vir, d_{0,0})$ or $(\Vir, d_{n,a})$ for some $n\in\Z^*=\Z\setminus{0}$ and $a\in\C^*$.

We first dispose the easy case of $(\Vir, d_{0,0})$ and defer the other case to Section~\ref{ss:na}. For any $\Vir$-module $V$, denote by $\Vir(V)$ the space spanned by all elements of the form $xv, x\in\Vir, v\in V$. Take a subspace $V'\subseteq V$ such that $V=V'\oplus \Vir(V)$ and a linear map $\d': V'\rightarrow V$. Define a linear map $\d: V\rightarrow V$ as follows:
$\d\big|_{V'}=\d'$ and $\d(v)=-v$ for any $v\in \Vir(V)$. Then $(V,\d)$ is a $1$-differential $(\Vir,d_{0,0})$-module.

Moreover, any $1$-differential
$(\Vir,d_{0,0})$-module can be obtained in this way.
Indeed, let $(V,\d)$ be a $1$-differential $(\Vir,d_{0,0})$-module. Noticing $d_{0,0}=-\id_\Vir$, we see that Eq.~\eqref{Delta-D} is equivalent to
$$\delta(xv)=-xv,\ \forall\ x\in\Vir, v\in V,$$
which has nothing to do with the values of $\d$ on a complementary space $V'$ of $\Vir(V)$ in $V$.
That is, $d_{0,0}$ is defined as in the previous paragraph.

In summary, we have the description of $1$-differential $(\Vir,d_{0,0})$-modules:
\begin{theorem}
All $1$-differential $(\Vir,d_{0,0})$-modules are precisely the pairs $(V,\delta)$ where $V$ is a
$\Vir$-module and $\delta:V\to V$ is a linear map such that $\delta(xv)=-xv$ for $x\in\Vir, v\in V$.
\end{theorem}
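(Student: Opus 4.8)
The plan is to derive this directly from Lemma~\ref{1-diff-mod}, exploiting the fact that $d_{0,0}=-\id_\Vir$ is about as simple as a $1$-differential operator can be. First I would form the operator $D=d_{0,0}+\id_\Vir$ occurring in that lemma and observe that it is identically zero, since $D=-\id_\Vir+\id_\Vir=0$. This is the one computation that drives everything: it forces the entire right-hand side of the criterion $\Delta(xv)=D(x)\Delta(v)$ to vanish.

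With $D=0$, Lemma~\ref{1-diff-mod} asserts that $(V,\delta)$ is a $1$-differential $(\Vir,d_{0,0})$-module if and only if $\Delta(xv)=0$ for all $x\in\Vir$ and $v\in V$, where $\Delta=\delta+\id_V$. Unwinding the definition of $\Delta$, this reads $\delta(xv)+xv=0$, that is, $\delta(xv)=-xv$. Because the equivalence in Lemma~\ref{1-diff-mod} runs in both directions, this single condition is simultaneously necessary and sufficient for $(V,\delta)$ to be a $1$-differential $(\Vir,d_{0,0})$-module, which is exactly the statement of the theorem.

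Since this is the easy case, I do not expect any genuine obstacle; the only point deserving a word is consistency. The condition $\delta(xv)=-xv$ depends only on the element $xv\in V$, not on any chosen expression for it, so it merely prescribes that $\delta$ act as $-\id_V$ on the subspace $\Vir(V)$ spanned by all such products, while leaving $\delta$ entirely free on a chosen complement $V'$ with $V=V'\oplus\Vir(V)$. This matches the explicit construction in the paragraphs immediately preceding the theorem and confirms that such $(V,\delta)$ exist for every $\Vir$-module $V$, so the characterization is nonvacuous.
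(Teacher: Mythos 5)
Your proposal is correct and follows essentially the same route as the paper: both reduce the claim to Lemma~\ref{1-diff-mod} by observing that $D=d_{0,0}+\id_\Vir=0$, so the criterion $\Delta(xv)=D(x)\Delta(v)$ collapses to $\delta(xv)=-xv$, with $\delta$ unconstrained on a complement of $\Vir(V)$. Nothing is missing; your closing remark on consistency matches the paper's explicit construction via the decomposition $V=V'\oplus\Vir(V)$.
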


\subsection{Differential modules over $(\Vir,d_{n,a})$}
\label{ss:na}

We next consider the case of differential modules over $(\Vir, d_{n,a})$ for fixed $n\in\Z^*=\Z\setminus{0}$ and $a\in\C^*$. For notational simplicity, we denote $d:=d_{n,a}$ and $D:=d+\id_{\Vir}$. We will consider differential structures for several types of modules over $\Vir$, including the classic Verma modules and modules of intermediate series, and some recently introduced non-Harish-Chandra modules.

\subsubsection{Differential structures on the Verma modules}
\label{sss:verma}

We first consider the Verma modules. For any Lie algebra $\LL$, denote its universal enveloping algebra by $U(\LL)$. Let $\Vir_{\pm}$ be the subalgebras of $\Vir$ spanned by $L_{\pm i}, i\in\N$, respectively. For any $h,c\in\C$, define a $1$-dimensional module $\C v_0$ over the subalgebra $\Vir_+\oplus\C L_0\oplus\C C$ by $L_0v_0=h v_0$, $C v_0=cv_0$ and $L_i v_0=0$ for all $i> 0$. Then we get the induced $\Vir$-module $$M(h,c)=\Ind_{U(\Vir_+\oplus \C L_0\oplus \C C)}^{U(\Vir)}\C v_0,$$
called the \textbf{Verma module} of highest weight $(h,c)$. Denote $M=M(h,c)$ for short.
By the Poincar\'e-Birkhoff-Witt Theorem, we see that
$$M=\bigoplus_{i_1\geq i_2\cdots\geq i_m\geq 1}\C L_{-i_1}L_{-i_2}\cdots L_{-i_m}v_0.$$
Moreover, $M$ has a weight decomposition
$$M=\bigoplus_{i\in\Z}M_{-i},\ \ \text{where}\ M_{-i}=\{v\in M\ |\ L_0v=(h-i)v, Cv=cv\}.$$
It is clear that $M_{-i}$ is spanned by the elements $L_{-i_1}L_{-i_2}\cdots L_{-i_m}v_0$ with
$i_1+\cdots+i_m=i$.
Any nonzero weight vector $v\in M_{-i}, i\in\N$ is called a \textbf{singular vector} if $L_iv=0$ for all $i\in\N$. Any nonzero weight vector $v\in M_{-i}, i\in\N$ is called an \textbf{$n$-singular vector} if $L_{ni}v=0$ for all $i\in\N$.

Now we can determine the $1$-differential module structure for the Verma module $M(h,c)$.

\begin{theorem}\label{highest weight} Suppose that $M=M(h,c)$ is a Verma module of highest weight $(h,c)\in\C^2$ defined as above and $\d: M\rightarrow M$ is a linear map.
Then $(M,\delta)$ is a $1$-differential module over $(\Vir,d_{n,a})$
if and only if $n>0$, $(n-1)c=0$, $(1-n)h\in\Z_+$ and there exists an $n$-singular vector $u\in M_{(n-1)h}$ such that
$$\d(L_{i_1}\cdots L_{i_m}v_0)=\frac{a^{i_1+\cdots+i_m}}{n^m}L_{ni_1}\cdots L_{ni_m}u-L_{i_1}\cdots L_{i_m}v_0,$$
for all $i_1\leq i_2\cdots\leq i_m\leq -1$.
In particular, if $n=1$, there exists $\xi\in\C$ such that $\d(v)=(\xi a^{-i}-1)v,\ \forall\ v\in M_{-i}.$
\end{theorem}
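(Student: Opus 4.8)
The plan is to reduce everything, via Lemma~\ref{1-diff-mod}, to understanding a single twisted intertwiner. Writing $\Delta=\delta+\id_M$ and $D=d_{n,a}+\id_{\Vir}=\varphi_n\tau_a$, the lemma says $(M,\delta)$ is a $1$-differential $(\Vir,d_{n,a})$-module exactly when $\Delta(xv)=D(x)\Delta(v)$ for all $x,v$. Since $D$ is a Lie algebra homomorphism, I would reinterpret this as the statement that $\Delta$ is a $\Vir$-module map from $M$ to the twisted module $M^D$, i.e.\ the space $M$ equipped with the action $x\cdot_D w:=D(x)w$. This single observation is what makes the whole problem tractable.

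Next I would exploit that $M=\Ind\,\C v_0$ is generated by $v_0$, so $\Delta$ is completely determined by $u:=\Delta(v_0)$, and unwinding gives $\Delta(L_{i_1}\cdots L_{i_m}v_0)=D(L_{i_1})\cdots D(L_{i_m})u$. Computing $D(L_i)=\varphi_n\tau_a(L_i)=\tfrac{a^i}{n}(L_{ni}-\delta_{i,0}\tfrac{n^2-1}{24}C)$ and substituting for the nonzero indices $i_\ell\le -1$ produces exactly the claimed formula for $\delta$. The content then lies in deciding for which $u$ this recipe actually defines a module map, and the universal property of the induced module answers this cleanly: $\Delta$ extends to a homomorphism $M\to M^D$ if and only if $u$ satisfies in $M$ the same highest-weight relations as $v_0$, but with each generator $x$ replaced by $D(x)$, namely $D(L_i)u=0$ $(i>0)$, $D(L_0)u=hu$ and $D(C)u=cu$.

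The third step is to translate these three relations into the numerical conditions. From $D(C)u=ncu=cu$ I get $(n-1)c=0$, assuming $u\ne0$ (the case $u=0$ is the trivial structure $\delta=-\id$). Since $D(L_0)=\tfrac1n(L_0-\tfrac{n^2-1}{24}C)$ acts diagonally on the weight decomposition of $M$, the relation $D(L_0)u=hu$ forces $u$ to be a single weight vector in some $M_{-k}$ with $k=(1-n)h-\tfrac{n^2-1}{24}c$, which under $(n-1)c=0$ simplifies to $k=(1-n)h$; as $M_{-k}\ne0$ needs $k\in\Z_+$, this yields $(1-n)h\in\Z_+$ and $u\in M_{(n-1)h}$. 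Finally $D(L_i)u=\tfrac{a^i}{n}L_{ni}u=0$ for $i>0$ becomes $L_{ni}u=0$ for all $i>0$, which for $n>0$ is precisely the $n$-singular condition.

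The step I expect to be the real obstacle is forcing $n>0$: when $n<0$ the conditions $L_{ni}u=0$ instead annihilate $u$ by lowering operators, and I must show this admits no nonzero solution. My plan here is a direct commutator estimate. For a nonzero weight vector $u\in M_{-k}$, choosing $N=|n|j>k$ kills $L_Nu$, since its image would sit above the highest weight, so $L_NL_{-N}u=[L_N,L_{-N}]u=\big(-2N(h-k)+\tfrac{N^3-N}{12}c\big)u$; if $L_{-N}u=0$ for all such $N$, this scalar must vanish for infinitely many $N$, which with $c=0$ forces $h=k$, and then $k=(1-n)h$ gives $nk=0$, hence $k=h=0$ and finally $u=0$, a contradiction. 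Thus nontrivial structures require $n>0$. The $n=1$ specialization is then immediate: $D=\tau_a$ and $k=0$, so $u=\xi v_0\in M_0$ and the formula collapses to $\delta(v)=(\xi a^{-i}-1)v$ on $M_{-i}$, giving the last assertion.
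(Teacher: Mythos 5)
Your proposal is correct, and while it shares the paper's skeleton (Lemma~\ref{1-diff-mod} turns the problem into the intertwining relation $\Delta(xv)=D(x)\Delta(v)$ with $\Delta=\delta+\id_M$, $D=\varphi_n\tau_a$, and the conditions $(n-1)c=0$, $u:=\Delta(v_0)\in M_{(n-1)h}$, $(1-n)h\in\Z_+$, $n$-singularity are extracted from $C$, $L_0$, $L_i$ exactly as the paper does), it diverges at the two points where real work is needed, and both of your alternatives hold up. First, for sufficiency the paper only says the formula "can easily be checked" to define a differential structure; you instead view $\Delta$ as a $\Vir$-homomorphism from $M$ into the twisted module $M^D$ (action $x\cdot w=D(x)w$, a module because $D$ is a homomorphism) and invoke Frobenius reciprocity for the induced module $M$: such a $\Delta$ with $\Delta(v_0)=u$ exists if and only if $u$ satisfies the three twisted highest-weight relations. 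This settles existence and the explicit formula in one stroke, and is a genuine tightening of the paper's argument. Second, to rule out $n<0$ the paper notes that the annihilator of $u$ (a subalgebra) contains $L_{ni}$, $i\in\N$, and $L_j$, $j>(1-n)h$, which generate all of $\Vir$, so $\C u$ is a trivial submodule, forcing $h=c=0$ and a contradiction; you instead compute $0=L_NL_{-N}u=\bigl(-2N(h-k)+\tfrac{N^3-N}{12}c\bigr)u$ for $N=|n|j>k$, which with $c=0$ gives $h=k$, then $k=(1-n)k$ forces $k=h=0$ and $u\in\C v_0$, contradicting $L_{-N}u=0$. Your version is more elementary (one commutator, no appeal to the structure of trivial submodules); the paper's is more structural and shorter to state. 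The one step you should spell out is the final contradiction: it needs $L_{-N}v_0\neq0$ in $M(0,0)$, which is immediate from the PBW basis of the Verma module --- a fact the paper also uses tacitly at the same spot.
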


\begin{proof}
As in Lemma \ref{1-diff-mod}, we set $\D=\d+\id_V$ and $D=d_{n,a}+\id_{\Vir}$ for some $n\in\Z^*$ and $a\in\C^*$. We note that
$$D(L_i)=\varphi_n\tau_a(L_i)=\frac{a^i}{n}(L_{ni}-\frac{n^2-1}{24}\delta_{i,0}C),\ \ D(C)=nC.$$
Without loss of generality, we may assume that $\D\neq 0$.
Then we have $\Delta(L_iv)=D(L_i)\Delta(v)$ for any $v\in M$ and $i\in\Z$, or, more explicitly,
\begin{equation}\label{highest weight-0}
c\D(v)=\D(Cv)=D(C)\D(v)=nc\D(v),\ \forall\ v\in M
\end{equation}
and \begin{equation}\label{highest weight-1}
\D(L_iv)=\frac{a^i}{n}(L_{ni}-\frac{n^2-1}{24}\delta_{i,0}C)\D(v),\ \forall\ v\in M, i\in\Z.
\end{equation}
By Eq.~\eqref{highest weight-0}, we see $(n-1)c=0$ and Eq.~\eqref{highest weight-1} is
always equivalent to
\begin{equation}\label{highest weight-2}
\D(L_iv)=\frac{a^i}{n}L_{ni}\D(v),\ \forall\ v\in M, i\in\Z.
\end{equation}
It is easy to see that $\D(v_0)=0$ implies that $\D(M)=0$. Hence $\D(v_0)\neq0$.

Taking $i=0$ and $v=v_0$ in Eq.~\eqref{highest weight-2}, we have $L_0\D(v_0)=nh\D(v_0)$, i.e., $\D(v_0)\in M_{(n-1)h}$,
which is nonzero. Hence $-(n-1)h\in \Z_+$.
Suppose that $\D(v_0)=u\in M_{(n-1)h}\setminus\{0\}$, then by Eq.~\eqref{highest weight-2}, we can obtain
\begin{equation}\label{highest weight-2.5}
\D(L_{i_1}L_{i_2}\cdots L_{i_m}v_0)=\frac{a^{i_1+i_2+\cdots+i_m}}{n^m}L_{ni_1}L_{ni_2}\cdots L_{ni_m}u,\ \forall\ i_1,\cdots,i_m\in\Z.
\end{equation}
In particular, $L_iv_0=0$ for $i\in\N$ implies $L_{ni}u=0$ for $i\in\N$, that is, $u$ is an $n$-singular vector. On the other hand, since $M$ is a highest weight module,
we have $L_ju=0$ for  $j>(1-n)h$. If $n<0$, then the subalgebra of $\Vir$ generated by $L_{ni}, L_j, i\in\N, j>(1-n)h$ is the whole $\Vir$.
That is, $\C u$ is a trivial $\Vir$-module. By the representation theory of the Virasoro algebra (c.f. \cite{IK, KR}), we obtain $h=c=0$ in this case.
In particular, $u$ is a nonzero multiple of $v_0$ and $\C v_0$ is a trivial submodule of $M$, a contradiction.
So we must have $n>0$.

Now we can easily check that Eq.~\eqref{highest weight-2.5} indeed defines a linear map $\D$ on $V$ and $(V,\delta)$ with $\delta=\Delta-\id_V$ is a $1$-differential $(\LL,d_{n,a})$-module, provided the assumptions in the theorem are satisfied. Finally, noticing that, when $n=1$, $u\in M_0=\C v_0$ and hence
$u=\xi v_0$ for some nonzero $\xi\in\C$. It follows that $\D(v)=a^{-i}\xi v$ for all $v\in M_{-i}, i\in\Z_+$, as desired.
\end{proof}

\subsubsection{Differential structures on modules of intermediate series}
\label{sss:inter}

We next consider another class of weight Virasoro modules, or weight $\Vir$-modules in short, the modules of intermediate series. For any $\a,\b\in\C$, the module $V(\a,\b)$ of intermediate series has a
basis $\{v_i\ |\ i\in\Z\}$ subject to the following $\Vir$-module action
$$L_iv_j=(\a+j+\b i)v_{i+j},\ C v_j=0,\ \ \forall\ i,j\in\Z.$$
We can determine the differential module structures on $V(\a,\b)$ explicitly.

\begin{theorem}\label{intermediate} Let $\d: V(\a,\b)\rightarrow V(\a,\b)$ be a linear map. Then $(V(\a,\b),\d)$ is a $1$-differential module over $(\Vir,d_{n,a})$ if and only if $(n-1)\a\in\Z$ and $\d$ is defined by $\d(v_i)=\xi a^iv_{(n-1)\a+ni}-v_i$ for some $\xi\in\C$.
\end{theorem}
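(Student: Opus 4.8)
The plan is to mirror the proof of Theorem~\ref{highest weight}, passing at once to the homomorphism form of the defining relation. Writing $\D=\d+\id_{V(\a,\b)}$ and $D=d_{n,a}+\id_{\Vir}$, Lemma~\ref{1-diff-mod} tells us that $(V(\a,\b),\d)$ is a $1$-differential module over $(\Vir,d_{n,a})$ if and only if $\D(xv)=D(x)\D(v)$ for all $x\in\Vir$. Since $Cv_j=0$ on $V(\a,\b)$, the central relation $\D(Cv)=D(C)\D(v)=nC\D(v)$ holds automatically, the term $\frac{n^2-1}{24}\d_{i,0}C$ in $D(L_i)$ is annihilated, and the whole condition collapses to
\begin{equation*}
\D(L_iv)=\frac{a^i}{n}L_{ni}\D(v),\qquad\forall\ i\in\Z,\ v\in V(\a,\b).
\end{equation*}
For the ``if'' direction I would just substitute $\D(v_i)=\xi a^i v_{(n-1)\a+ni}$ and verify this identity directly, using $\a+\big((n-1)\a+nj\big)=n(\a+j)$ and $ni+\big((n-1)\a+nj\big)=(n-1)\a+n(i+j)$, so that both sides reduce to $\xi a^{i+j}(\a+j+\b i)\,v_{(n-1)\a+n(i+j)}$.

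For the ``only if'' direction I assume $\D\neq0$ (the excluded case $\D=0$ being $\xi=0$). First I extract the weight constraint by taking $i=0$: the relation gives $L_0\D(v_j)=n(\a+j)\D(v_j)$, so whenever $\D(v_j)\neq0$ it is an $L_0$-eigenvector of eigenvalue $n(\a+j)$. Because the weights of $V(\a,\b)$ are exactly $\{\a+k\mid k\in\Z\}$ with one-dimensional weight spaces $\C v_k$, this forces $n(\a+j)=\a+k$ for some $k\in\Z$, hence $(n-1)\a\in\Z$; and then for every $j$ one has $\D(v_j)=c_j v_{k_j}$ with $k_j:=(n-1)\a+nj$ and scalars $c_j\in\C$ (vanishing where $\D(v_j)=0$). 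Since $\D\neq0$, some $c_j\neq0$, which is precisely what delivers $(n-1)\a\in\Z$.

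Feeding $\D(v_j)=c_jv_{k_j}$ back into the relation for general $i$ and using $L_{ni}v_{k_j}=n(\a+j+\b i)v_{k_{i+j}}$ reduces it to the scalar identity $(\a+j+\b i)(c_{i+j}-a^ic_j)=0$ for all $i,j\in\Z$. Setting $b_m:=c_m a^{-m}$, this says $b_{i+j}=b_j$ whenever $\a+j+\b i\neq0$, and the remaining goal is to show $b$ is constant, giving $c_m=\xi a^m$ with $\xi:=b_0$. I would phrase this as a connectivity statement on the graph with vertex set $\Z$ in which $p$ and $q$ are joined when at least one of $\a+q+\b(p-q)$ or $\a+p+\b(q-p)$ is nonzero; a one-line computation shows both can vanish only if $\b=\tfrac12$ and $p+q=-2\a$, so for $\b\neq\tfrac12$ (or $2\a\notin\Z$) the graph is complete and $b$ is constant immediately. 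The one genuinely delicate case, and the main obstacle, is $\b=\tfrac12$ with $2\a\in\Z$: there each vertex $p$ is non-adjacent only to the single vertex $-2\a-p$, and I would finish by routing any such non-adjacent pair through an arbitrary third index to restore connectivity, so $b$ is again constant. Collecting these gives $\d(v_i)=\D(v_i)-v_i=\xi a^i v_{(n-1)\a+ni}-v_i$, as claimed.
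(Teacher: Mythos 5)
Your proposal is correct, and its overall skeleton matches the paper's: pass to $\D=\d+\id_V$ and $D=d_{n,a}+\id_\Vir$ via Lemma~\ref{1-diff-mod}, use $CV(\a,\b)=0$ to discard the central terms, extract the weight constraint from the $i=0$ relation to get $(n-1)\a\in\Z$ and $\D(v_j)\in\C v_{(n-1)\a+nj}$, and then propagate scalars through the relation $(\a+j+\b i)(c_{i+j}-a^ic_j)=0$. Where you genuinely diverge is in the propagation step, and your organization is different from (and arguably cleaner than) the paper's. The paper fixes a single base index $k$ with $\D(v_k)\neq0$ and propagates forward from it (the substitution $j=k$, $i\mapsto i-k$); for $\b\neq 0,1$ this reaches all but at most one index, and the exception is repaired by switching the base point, while the cases $\b\in\{0,1\}$ need a separate argument using the reverse substitution ($j=i$, $i\mapsto k-i$). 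You instead symmetrize from the outset: both substitutions become edges of a graph on $\Z$, and a short computation identifies the unique non-adjacency configuration as $\b=\tfrac12$ with $p+q=-2\a$; hence the graph is complete unless $\b=\tfrac12$ and $2\a\in\Z$, and even then it is connected with diameter $2$. Consequently the two arguments have different ``hard cases'' ($\b\in\{0,1\}$ for the paper, $\b=\tfrac12$ for you), which simply reflects which of the two relations each leans on; your version buys a uniform treatment with an exact description of the failure locus and no base-point switching, at no cost in length. One shared (and equally glossed-over) edge point: both you and the paper dispose of $\D=0$ as the trivial structure $\xi=0$, for which the constraint $(n-1)\a\in\Z$ in the theorem's statement is not actually forced; since you handle it exactly as the paper does, this is an imprecision of the statement, not a gap in your proof.
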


\begin{proof}
As before we set $\D=\d+\id_V\neq0$. Noticing $CV(\a,\b)=0$, by $\D(L_iv_j)=D(L_i)\D(v_j)$ we have
\begin{equation}\label{intermediate-1}
(\a+j+i\b)\Delta(v_{i+j})=\frac{a^i}{n}L_{ni}\Delta(v_j).
\end{equation}
When $i=0$, it becomes $n(\a+j)\Delta(v_j)=L_0\Delta(v_j)$. We observe that $\D(v_j)\neq 0$ implies that $\D(v_j)$ is a weight vector of weight $n(\a+j)$, forcing $n(\a+j)\in \a+\Z$, or equivalently, $(n-1)\a\in\Z$.

Since $\D\neq 0$, we have $\D(v_k)\neq0$ for some $k\in\Z$.
Then we have $(n-1)\a\in\Z$ and set $\D(v_k)=\xi a^kv_{(n-1)\a+nk}$ for some nonzero $\xi\in\C$.
Taking $j=k$ and replacing $i$ with $i-k$ in Eq.~\eqref{intermediate-1},
we can easily deduce
$\D(v_{i})=\xi a^{i}v_{(n-1)\a+ni}$ provided $\a+k+(i-k)\b\neq 0$.
If $\b\neq0$ and $\b\neq1$, there is at most one $i\in\Z$ such that $\a+k+(i-k)\b=0$ and we see
$\D(v_{i})=\xi a^{i}v_{(n-1)\a+ni}$ for all but at most one $i\in\Z$. If there is indeed some $i_0$ with $\a+k+(i_0-k)\b=0$, replacing $k$ by
some $k'$ with $k'\neq i_0$ and $\a+k'+(i_0-k')\b\neq 0$ in the above argument, we deduce
$\D(v_{i_0})=\xi a^{i_0}v_{(n-1)\a+ni_0}$.

Now suppose $\b=0$ or $\b=1$. Replacing $i$ with $k-i$ and $j$ with $i$ in Eq.~\eqref{intermediate-1},
we deduce
$$L_{n(k-i)}\Delta(v_i)=\xi a^{i}n(\a+i+(k-i)\b)v_{(n-1)\a+nk}.$$
Noticing that $\a+i+(k-i)\b\neq0$ implies $\D(v_i)\neq0$ and, as we have remarked in the first paragraph,
$\D(v_i)$ must be a nonzero multiple of $v_{(n-1)\a+ni}$. Comparing the coefficients in the above equation, we see $\D(v_i)=\xi a^iv_{(n-1)\a+ni}$.
We obtain $\D(v_i)=\xi a^iv_{(n-1)\a+ni}$ for all $i\in\Z$ with $\a+k+(i-k)\b\neq 0$
or $\a+i+(k-i)\b\neq0$. Since $\b=0$ or $1$, we obtain $\D(v_i)=\xi a^iv_{(n-1)\a+ni}$ for all $i\in\Z$ in this case.

Finally, it is direct to check that $\d=\D-\id_{V(\a,\b)}$, with $\D$ defined as above, indeed defines a
$1$-differential module structure on $V(\a,\b)$.
\end{proof}

\subsubsection{Differential structures on non-weight modules I}
\label{sss:nonw}

Next we consider the differential module structure for some
non-weight modules with explicit module structures, including
fraction modules and highest-weight-like modules and some modules
coming from irreducible Weyl algebra modules, which are introduced
and studied in \cite{GLZ, LGZ, LZ, TZ2}. Let $\C[t]$ be the vector
space of all polynomials in $t$ and $\mu\in\C$. We define a module
structure on $\C[t]$ by
$$L_i(t^j)=\mu^i(t-ib)(t-i)^j, C t^j=0, \ \forall\ i\in\Z, j\in\Z_+.$$
We denote this module by $\Omega(\mu,b)$.

\begin{theorem}\label{Omega} Let $\d: \Omega(\mu,b)\rightarrow \Omega(\mu,b)$ be a linear map. Then $(\Omega(\mu,b), \d)$ is a
$1$-differential module over $(\Vir, d_{n,a})$ if and only if $a\mu^{n-1}=1$ and $\d$ is defined by $\d(t^j)=\xi\big(\frac{t}{n}\big)^j-t^j$ for some $\xi\in\C$.
\end{theorem}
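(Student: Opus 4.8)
The plan is to follow the same strategy used in Theorems~\ref{highest weight} and~\ref{intermediate}: set $\D=\d+\id_V$ and use Lemma~\ref{1-diff-mod}, which says that $(\Omega(\mu,b),\d)$ is a $1$-differential module over $(\Vir,d_{n,a})$ if and only if $\D(xv)=D(x)\D(v)$ for all $x\in\Vir$. Since $C$ acts as zero on $\Omega(\mu,b)$ and $D(C)=nC$ also acts as zero, the central condition imposes nothing. The governing equations are therefore
\begin{equation*}
\D\big(L_i(t^j)\big)=D(L_i)\D(t^j)=\frac{a^i}{n}L_{ni}\,\D(t^j),\ \forall\ i\in\Z,\ j\in\Z_+,
\end{equation*}
where as before the $\delta_{i,0}C$ term drops out because $C$ acts trivially. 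First I would dispose of the trivial case $\D=0$ and so assume $\D\neq0$.

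The first key step is to extract the constraint $a\mu^{n-1}=1$. Taking $j=0$ and an arbitrary $i$, the left side is $\D\big(L_i(1)\big)=\D\big(\mu^i(t-ib)\big)$, while the right side is $\tfrac{a^i}{n}L_{ni}\,\D(1)$. I expect that evaluating this on the generator $t^0=1$ and comparing the exponential prefactors $\mu^i$ (from the $L_i$-action on $\Omega(\mu,b)$) against $a^i\mu^{ni}$ (from the $\tfrac{a^i}{n}L_{ni}$ on the right) will force $\mu^i=a^i\mu^{ni}$ for all $i$, i.e.\ $a\mu^{n-1}=1$; this is the analogue of the weight-matching condition $(n-1)\a\in\Z$ in Theorem~\ref{intermediate}. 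The cleanest route is probably to first guess the formula $\D(t^j)=\xi(t/n)^j$ and verify it directly, then show uniqueness.

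The second key step is to determine $\D$ completely. I would start from $\D(1)$: since $\D\neq0$ and the recursion propagates nonvanishing (as in the Verma case where $\D(v_0)=0$ forced $\D(M)=0$), I expect $\D(1)\neq0$, say $\D(1)=p(t)$ for some polynomial $p$. Feeding this into the $j=0$ relations $\D\big(\mu^i(t-ib)\big)=\tfrac{a^i}{n}L_{ni}\,p(t)$ for varying $i$ should pin down $p(t)$ to be a constant $\xi$, after which the relations for general $j$ determine $\D(t^j)$ recursively. The natural candidate $\D(t^j)=\xi(t/n)^j$ suggests substituting $t\mapsto t/n$; indeed $L_{ni}(t/n)^j=\mu^{ni}(t/n-nib)(t/n-ni)^j$, and I expect the identity $a\mu^{n-1}=1$ to make this collapse exactly to $\tfrac{n}{a^i}\mu^i(t-ib)(t/n-i)^j\cdot(\text{rescaling})$, reproducing $\tfrac{a^i}{n}L_{ni}\D(t^j)=\D\big(L_i(t^j)\big)=\mu^i(t-ib)\D\big((t-i)^j\big)$ once one checks $\D\big((t-i)^j\big)=\xi((t-i)/n)^j$. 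Finally I would reverse the computation to confirm that $\d=\D-\id$ with $\D(t^j)=\xi(t/n)^j$ genuinely satisfies Eq.~\eqref{def-l-mod}, giving the ``if'' direction.

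The main obstacle I anticipate is the bookkeeping in the general-$j$ step: the action $L_i(t^j)=\mu^i(t-ib)(t-i)^j$ is \emph{not} homogeneous in $t$, so unlike the intermediate-series case one cannot simply match weight spaces. Instead the argument must track how $\D$ interacts with the shift $t\mapsto t-i$ and the scaling $t\mapsto t/n$ simultaneously, and one must verify that the candidate $\D(t^j)=\xi(t/n)^j$ is forced rather than merely consistent. The cleanest way around this is likely to prove that any $\D$ satisfying the relations is determined by the single value $\D(1)=\xi$ (via a generating-function or induction-on-$j$ argument using the $i=\pm1$ relations), and to treat the verification of the explicit formula as the routine computation that establishes both existence and the precise shape of $\d$.
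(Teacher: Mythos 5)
Your proposal runs on the same chassis as the paper's proof---Lemma~\ref{1-diff-mod}, $\Delta=\delta+\mathrm{id}$, the family of equations $\Delta(L_i(t^j))=\frac{a^i}{n}L_{ni}\Delta(t^j)$, and the reduction of everything to the single value $\Delta(1)$---but you traverse it in the opposite order, and in doing so you miss the one observation that makes the paper's argument a three-line computation. The paper takes $i=0$ first: since $L_0$ acts on $\Omega(\mu,b)$ as multiplication by $t$ (no shift, no $\mu$-factor), the relation collapses to $\Delta(t^{j+1})=\frac{t}{n}\Delta(t^j)$, whence $\Delta(t^j)=(t/n)^j h(t)$ with $h=\Delta(1)$ by induction on $j$. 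Substituting this back into the general relation and cancelling the common factor $\bigl(\frac{t}{n}-ib\bigr)\bigl(\frac{t}{n}-i\bigr)^j$ yields the single functional equation $h(t)=a^i\mu^{(n-1)i}h(t-ni)$ for all $i\in\Z$, which delivers both conclusions at once: comparing leading coefficients forces $a\mu^{n-1}=1$, and then $h(t)=h(t-ni)$ forces $h$ constant, since a nonconstant polynomial cannot be periodic. In particular, the ``main obstacle'' you anticipate---that the action is non-homogeneous and one must track the shift $t\mapsto t-i$ against the scaling $t\mapsto t/n$---is a phantom: the $i=0$ relations involve no shift at all, and they alone pin down $\Delta$ up to the scalar $h$.

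Your inverted route (constraint first from the $j=0$ family, then a recursion in $j$) can be made to work, but it needs two repairs. First, ``comparing the exponential prefactors'' in the $j=0$ relations is not legitimate as stated: the left side is $\mu^i\bigl(\Delta(t)-ib\,\Delta(1)\bigr)$ and the right side is $\frac{a^i\mu^{ni}}{n}(t-nib)\,h(t-ni)$, so both sides involve unknown polynomials rather than bare scalars multiplying a common vector. Either you argue via linear independence of exponential-polynomial functions of $i$ (and separately rule out $\Delta(1)=0$, which again needs the $i=0$ recursion), or---much simpler---you first invoke the $i=j=0$ relation $\Delta(t)=\frac{t}{n}\Delta(1)$, after which the $j=0$ family reduces exactly to the paper's functional equation above. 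Second, two formulas in your verification sketch are wrong: $L_{ni}$ applied to $(t/n)^j$ equals $\mu^{ni}(t-nib)\bigl(\frac{t-ni}{n}\bigr)^j$, not $\mu^{ni}(t/n-nib)(t/n-ni)^j$; and $\Delta\bigl((t-i)^j\bigr)=\xi\bigl(\frac{t}{n}-i\bigr)^j$ (the rule is substitution $t\mapsto t/n$ in the polynomial), not $\xi\bigl(\frac{t-i}{n}\bigr)^j$. With the corrected formulas the ``if'' direction becomes the identity $\xi\mu^i\bigl(\frac{t}{n}-ib\bigr)\bigl(\frac{t}{n}-i\bigr)^j=a^i\mu^{ni}\xi\bigl(\frac{t}{n}-ib\bigr)\bigl(\frac{t}{n}-i\bigr)^j$, which holds precisely when $a\mu^{n-1}=1$, as claimed.
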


\begin{proof} As before, we set $\D=\d+\id_{\Omega(\mu,b)}$ and $D=d_{n,a}+\id_{\Vir}$. Without loss of generality, we assume that $\D\neq 0$.
By Lemma \ref{1-diff-mod},
we get
\begin{equation}\label{omega-1}
\Delta(L_i(t^j))=\Delta(\mu^i(t-ib)(t-i)^j)=\frac{a^i}{n}L_{ni}\Delta(t^j),\ i\in\Z, j\in\Z_+.
\end{equation}
Taking $i=0$, we have $\Delta(t^{j+1})=\frac{1}{n}L_0\Delta(t^j)=\frac{t}{n}\Delta(t^j)$.
By an induction on $j$, we obtain $\Delta(t^j)=(\frac{t}{n})^j\Delta(1)$ for all $j\in\Z_+$.

Suppose $\D(1)=h(t)\in\C[t]$ and substitute $\Delta(t^j)=(\frac{t}{n})^jh(t)$ into Eq.~\eqref{omega-1}, we get
$$\mu^i(\frac{t}{n}-ib)(\frac{t}{n}-i)^jh(t)=\frac{\mu^{ni}a^i}{n}(t-nib)(\frac{t-ni}{n})^jh(t-ni).$$
So we have
$$h(t)=a^i\mu^{(n-1)i}h(t-ni),\ \forall\ i\in\Z,$$
which implies that $a\mu^{n-1}=1$ and $h=\xi\in\C$. This completes the proof.
\end{proof}

\subsubsection{Differential structures on non-weight modules II}
\label{sss:nonw2}

Let $\C(t)$ be the fraction field of $\C[t]$
and $\C(t)[s]$ be the skew polynomial ring with $s
t^i=t^i(s +i)$. Then $\mathcal{K}=\C[t^{\pm1}, s]$ is a subalgebra of $\C(t)[s]$. For any $\a\in\C(t)$, we see that
$s-\a$ is an irreducible polynomial in $\C(t)[s]$ and hence
$$A=\mathcal{K}/\left(\mathcal{K}\cap\big(\C(t)[s](s-\a)\big)\right)$$
is an irreducible $\mathcal{K}$-module. For any $\b\in\C$, we
can define a $\Vir$-module structure on $A$ by
\begin{equation} L_if(t)=\big(\p+\a+i\b\big)t^if(t),\end{equation}
where $\p=t\frac{d}{dt}$ and we still use $f(t)$ to denote its image in $A$. We denote this
$\Vir$-module by $A_{\a,\b}$.

Note that when $\a\in\C$, the modules $A_{\a,\b}$ are just those modules considered in Theorem \ref{intermediate}.
So we always assume that $\a\in\C(t)\setminus\C$ in the following.

Suppose $\a=\a_1(t)/\a_2(t)$ such that $\a_1, \a_2\in\C[t]$ are coprime and
$\a_2(t)=(t-a_1)^{l_1}\cdots(t-a_r)^{l_r}$ for some distinct $a_1,\cdots,a_r\in\C^*$ and $l_1,\cdots,l_r\in\N$. Then as a vector space,
$A_{\a,\b}$ may be regarded as the associative subalgebra of $\C(t)$ generated by $t$ and $(t-a_i)^{-1}, i=0,1,\cdots,r$,
where we have taken $a_0=0$. In what follows, we will fix these notations and make the identification
\begin{equation}
A_{\a,\b}=\C[t^{\pm1}, (t-a_1)^{-1},\cdots,(t-a_r)^{-1}].
\label{eq:aab}
\end{equation}
It is clear that $A_{\a,\b}$ has a basis consisting of the following elements:
$$1, t^k, t^{-k}, (t-a_i)^{-k}, k\in\N, i=1,\cdots,r.$$

To describe the differential module
structure on $A_{\a,\b}$, we need the following preliminary lemmas:

\begin{lemma}\label{partial} Let $f(t), g(t)\in A_{\a,\b}$. If $\p(f)=gf$, then $f=c t^{m_0}(t-a_1)^{m_1}\cdots (t-a_r)^{m_r}$ for some $c\in\C$ and $m_i\in\Z, i=0,1,\cdots,r$.
\end{lemma}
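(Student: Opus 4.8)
The plan is to work inside the rational function field $\C(t)$, of which $A_{\a,\b}$ is the subring $\C[t^{\pm1},(t-a_1)^{-1},\dots,(t-a_r)^{-1}]$ consisting of those rational functions whose poles lie in the set $S=\{0,a_1,\dots,a_r\}=\{a_0,a_1,\dots,a_r\}$. Assuming $f\neq0$ (the case $f=0$ being trivial), I would use that $\C$ is algebraically closed, so $\C[t]$ is a UFD whose irreducibles are the linear polynomials $t-\zeta$. Hence $f$ factors uniquely as $f=c\prod_{\zeta\in\C}(t-\zeta)^{v_\zeta(f)}$ with $c\in\C^*$ and integers $v_\zeta(f)$, almost all zero; here $v_\zeta(f)>0$ at a zero and $v_\zeta(f)<0$ at a pole. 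In this language $f\in A_{\a,\b}$ simply says $v_\zeta(f)\ge0$ for all $\zeta\notin S$, and the goal is the stronger statement that $v_\zeta(f)=0$ for all $\zeta\notin S$, which is exactly the asserted monomial form with $m_j=v_{a_j}(f)$.

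The engine of the proof is the logarithmic derivative. Since $\p=t\frac{d}{dt}$ is a derivation and $\p(t-\zeta)/(t-\zeta)=t/(t-\zeta)$, additivity of the logarithmic derivative gives
$$g=\frac{\p(f)}{f}=\sum_{\zeta\in\C}v_\zeta(f)\,\frac{t}{t-\zeta}=\Big(\sum_{\zeta}v_\zeta(f)\Big)+\sum_{\zeta\neq0}\frac{v_\zeta(f)\,\zeta}{t-\zeta},$$
where I have used $t/(t-\zeta)=1+\zeta/(t-\zeta)$ and noted that the term at $\zeta=0$ contributes only the constant $v_0(f)$. The key point I would verify is that this is the partial fraction decomposition of $g$: for each $\zeta\neq0$ with $v_\zeta(f)\neq0$ the summand $v_\zeta(f)\zeta/(t-\zeta)$ has nonzero residue $v_\zeta(f)\zeta$, and by uniqueness of partial fractions these simple poles cannot cancel against one another or against the constant term. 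Consequently $g$ has a genuine simple pole at every such $\zeta$.

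It then remains to combine this with the hypothesis $g\in A_{\a,\b}$. Membership forces $g$ to have no poles outside $S$, so for every $\zeta\notin S$ we must have $v_\zeta(f)\,\zeta=0$; since such $\zeta\neq0$, this yields $v_\zeta(f)=0$. Together with $f\in A_{\a,\b}$ (which already rules out poles of $f$ outside $S$) we conclude $v_\zeta(f)=0$ for all $\zeta\notin S$, whence $f=c\prod_{j=0}^{r}(t-a_j)^{m_j}$ with $a_0=0$ and $m_j=v_{a_j}(f)\in\Z$, as required. The only real obstacle is the middle step, the non-cancellation of the simple poles in the partial fraction expansion of $g$; the separate, harmless behaviour of the point $\zeta=0$ (where $t/(t-\zeta)$ is regular, so a zero or pole of $f$ at $0$ imposes no constraint) must be tracked carefully, but everything else is formal.
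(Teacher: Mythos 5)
Your proof is correct, and while it runs on the same engine as the paper's---the logarithmic derivative of $f$---it executes the idea in the opposite direction and in a genuinely cleaner, purely algebraic way. The paper first reduces to a Laurent polynomial, writing $f=f_1/\a_2^m$ with $f_1\in\C[t^{\pm1}]$, turns $\p(f)=gf$ into an identity for $f_1$, and then \emph{integrates}: it expresses $f_1$ as $\pm\exp\bigl(\int(\cdots)\,dt\bigr)$, ``regarding elements in $\C(t)$ as $\C$-valued functions,'' and asserts that for this exponential to land back in $\C[t^{\pm1}]$ the integrand must be a combination of the $(t-a_i)^{-1}$, $i=0,\dots,r$. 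You instead \emph{differentiate}: factor $f$ by unique factorization in $\C[t]$, compute $g=\p(f)/f$ as an explicit partial-fraction expansion whose residue at each $\zeta\neq0$ is $v_\zeta(f)\,\zeta$, and read off from $g\in A_{\a,\b}$ that $v_\zeta(f)=0$ for every $\zeta\notin\{0,a_1,\dots,a_r\}$. What your route buys is rigor and economy: uniqueness of partial fractions (equivalently, computing residues) replaces the paper's analytic detour through $\int$ and $\exp$, which leaves unjustified precisely the key step---which integrands exponentiate to Laurent polynomials---and you need no preliminary reduction $f=f_1/\a_2^m$ at all. Your explicit bookkeeping at $\zeta=0$, where $t/(t-\zeta)$ is regular so no constraint arises (harmlessly, since $0$ is among the allowed poles), is exactly the care required by the operator $\p=t\,\frac{d}{dt}$, a point the paper handles only implicitly by absorbing $t$ into the denominator $t\a_2^{k+1}$.
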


\begin{proof}
Assume that
$f(t)=\frac{f_1(t)}{\a_2^m(t)}$
and $g(t)=\frac{g_1(t)}{\a_2^k(t)}$ for suitable $f_1, g_1\in\C[t^{\pm1}]$ and $m,k\in\Z_+$, then $\p(f)=gf$ becomes
$$\a_2^{k}(\p(f_1)\a_2-mf_1\p(\a_2))=\a_2f_1g_1.$$
Regarding elements in $\C(t)$ as $\C$-valued functions, we can deduce
$$f_1(t)=\pm\exp\Big(\int \frac{\a_2g_1+m\a_2^k\p(\a_2)}{t\a_2^{k+1}} \mathrm{d}t\Big).$$
To ensure that $f_1(t), g_1(t), \a_2(t)\in \C[t^{\pm1}]$,
we must have
that $\frac{\a_2g_1+m\a_2^k\p(\a_2)}{t\a_2^{k+1}}$ is a linear combination of $(t-a_i)^{-1}, i=0,1,\cdots,r$
and hence $f_1(t)=c\prod_{i=0}^r(t-a_i)^{m_i}$ for some $c\in\C$ and $m_i\in\Z_+$. The lemma follows.
\end{proof}

\begin{lemma}\label{n=1}
Let $\omega$ be a primitive root of unity of degree $d$ and $b_1,\cdots,b_s\in\C^*$ such that all elements
$b_i\omega^j, i=1,\cdots,s; j=1,\cdots,d$ are distinct. Denote $A=\C[t^{\pm1}, (t-b_i\omega^j)^{-1}, i=1,\cdots,s; j=1,\cdots,d]$.
Let $A^\omega$ be the subspace of $A$ consisting of all elements $f(t)\in A$ satisfying the condition $f(\omega t)-f(t)=0$. Then we have
$$A^\omega=\C[t^{\pm d}]\oplus \sum_{k\in\N}\sum_{i=1}^s\C f_{i,k}(t),$$
where $f_{i,k}(t)=\sum_{j=1}^{d}(\omega^{j}t-b_i)^{-k}.$
\end{lemma}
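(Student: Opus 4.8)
The plan is to exploit the finite cyclic symmetry $t\mapsto\omega t$. I would define the linear map $T\colon A\to A$ by $(Tf)(t)=f(\omega t)$. Since $T$ sends each algebra generator back into $A$ (indeed $T(t^{\pm1})=\omega^{\pm1}t^{\pm1}$ and $T\big((t-b_i\omega^j)^{-1}\big)=\omega^{-1}(t-b_i\omega^{j-1})^{-1}$, with the index read cyclically modulo $d$), it is a well-defined algebra automorphism with inverse $f(t)\mapsto f(\omega^{-1}t)$, and $T^d=\id$ because $\omega^d=1$. By definition $A^\omega=\ker(T-\id)$. As we work over $\C$ and $T^d=\id$, I would introduce the averaging projection $P=\frac1d\sum_{l=0}^{d-1}T^l$, which satisfies $TP=P$ and $P|_{A^\omega}=\id$, so that $A^\omega=\operatorname{Im}P$. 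Consequently $A^\omega$ is the $\C$-span of $\{P(e)\}$ as $e$ runs over the partial-fraction basis $1,t^{\pm k},(t-b_i\omega^j)^{-k}$ of $A$ (the exact analog of the basis recorded before \eqref{eq:aab}), and it suffices to compute $P$ on these basis vectors.

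On a Laurent monomial one has $T^l(t^m)=\omega^{lm}t^m$, whence $P(t^m)=\big(\frac1d\sum_{l}\omega^{lm}\big)t^m$, which equals $t^m$ when $d\mid m$ and vanishes otherwise; this produces exactly the summand $\C[t^{\pm d}]$. The heart of the argument is the computation on a pole term: from $T^l\big((t-b_i\omega^j)^{-k}\big)=\omega^{-lk}(t-b_i\omega^{j-l})^{-k}$, reindexing the sum by $p=j-l$ gives $P\big((t-b_i\omega^j)^{-k}\big)=\frac{\omega^{-jk}}{d}\sum_{p}\omega^{pk}(t-b_i\omega^{p})^{-k}$, where $p$ runs over a complete residue system modulo $d$. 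Rewriting $(\omega^{j}t-b_i)^{-k}=\omega^{-jk}(t-b_i\omega^{-j})^{-k}$ shows that the definition $f_{i,k}(t)=\sum_{j=1}^d(\omega^{j}t-b_i)^{-k}$ equals precisely $\sum_{p}\omega^{pk}(t-b_i\omega^{p})^{-k}$, so $P\big((t-b_i\omega^j)^{-k}\big)=\frac{\omega^{-jk}}{d}f_{i,k}$. Hence the projections of the pole basis vectors span exactly $\sum_{k\in\N}\sum_{i=1}^s\C f_{i,k}$, and combining with the Laurent part yields $A^\omega=\C[t^{\pm d}]+\sum_{k,i}\C f_{i,k}$. (That each $f_{i,k}$ itself lies in $A^\omega$ can also be seen directly, since a shift of the summation index gives $f_{i,k}(\omega t)=f_{i,k}(t)$.)

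It remains to see the sum is direct, which I would settle through the pole structure of the partial-fraction decomposition. Elements of $\C[t^{\pm d}]$ have no poles at nonzero finite points, whereas each $f_{i,k}$ has genuine poles at the points $b_i\omega^{p}$; hence $\C[t^{\pm d}]\cap\sum_{k,i}\C f_{i,k}=0$. For the linear independence of the $f_{i,k}$, observe that near the pole $b_i\omega^{-j_0}$ only the single summand with $j=j_0$ is singular, so $f_{i,k}$ has principal part $\omega^{-j_0k}(t-b_i\omega^{-j_0})^{-k}$ there, a pole of order exactly $k$; moreover the distinctness hypothesis on the $b_i\omega^j$ guarantees the pole sets of $f_{i,k}$ and $f_{i',k'}$ are disjoint for $i\neq i'$. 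Thus in any relation $\sum_{i,k}c_{i,k}f_{i,k}=0$, inspecting the principal part at a fixed pole $b_i\omega^{-j_0}$ forces $\sum_k c_{i,k}\omega^{-j_0k}(t-b_i\omega^{-j_0})^{-k}=0$, and independence of the $(t-b_i\omega^{-j_0})^{-k}$ over $k$ gives $c_{i,k}=0$ for all $k$; running over $i$ kills every coefficient. The main obstacle is thus twofold: carrying out the reindexing in the second paragraph carefully enough to recognize $f_{i,k}$ (the cyclic bookkeeping of the $\omega$-powers is where exponent errors creep in), and justifying directness via the pole analysis, for which the distinctness assumption on $\{b_i\omega^j\}$ is exactly what is needed.
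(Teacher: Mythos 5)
Your proof is correct, and it is organized differently from the paper's. The paper's own proof is only a sketch: it fixes an invariant element $f$, writes it in the partial-fraction basis $1, t^{\pm k}, (t-b_i\omega^j)^{-k}$, and asserts that comparing coefficients in $f(\omega t)=f(t)$ ``directly'' gives the result --- the point being that the substitution $t\mapsto\omega t$ permutes these basis elements up to nonzero scalars, so the Laurent coefficients must be supported on exponents divisible by $d$, while for each fixed pair $(i,k)$ the recursion $c_{i,j+1,k}=\omega^{k}c_{i,j,k}$ among the pole coefficients leaves exactly a one-dimensional solution space, spanned by the coefficient vector of $f_{i,k}$. You instead invoke the group-averaging idempotent $P=\frac1d\sum_{l=0}^{d-1}T^l$ and identify $A^\omega=\operatorname{Im}P$, which mechanizes the harder inclusion: rather than solving coefficient constraints, you compute $P$ on each basis vector, and your key identities check out --- $P(t^m)$ equals $t^m$ or $0$ according to whether $d\mid m$, and the reindexing giving $P\big((t-b_i\omega^j)^{-k}\big)=\frac{\omega^{-jk}}{d}f_{i,k}$ is carried out with the $\omega$-power bookkeeping done right. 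Both arguments rest on the same two pillars (the partial-fraction basis of $A$ and the fact that $T$ permutes it up to scalars), so the mathematical content is close; but your Reynolds-operator packaging is cleaner and more general (it applies verbatim to invariants of any finite-order automorphism in characteristic zero), and, unlike the paper's proof, you actually justify the directness asserted by the symbol $\oplus$, via the principal-part analysis at the poles $b_i\omega^j$, which is precisely where the distinctness hypothesis on the $b_i\omega^j$ is used.
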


\begin{proof} It is straightforward to check that any polynomial $f\in A^{\omega}$ does satisfy the equation $f(\omega t)-f(t)=0$.
On the other hand, take any element $f\in A_{\a,\b}$ such that $f(\omega t)-f(t)=0$. Writing $f$ as the linear combination
of the basis elements $1, t^{k}, t^{-k}, (\omega^jt-b_i)^{-k}, i=1,\cdots,s; j=1,\cdots, d; k\in\N$, we can check directly that
$f$ is a linear combination of elements in $\C[t^{\pm d}]$ and $f_{i,k}$, where $i=1,\cdots,s$ and $k\in\N$.
\end{proof}

\begin{lemma}\label{n=-1}
Let $\omega\in\C^*$ and $g\in\C(t)\setminus\{0\}$. If $g(\omega t^{-1})+g(t)=0$,
then $g(t)$ must be a polynomial of the following form:
$$\frac{t^{k}(t^2-\omega)\prod_{i=1}^l(t-\l_i)(\l_i t-\omega)}{\prod_{i=1}^{m}(t-\mu_i)(\mu_i t-\omega)},$$
where $\l_i, \mu_i\in\C^*$ (possibly not distinct) and $m=l+k+1$.
\end{lemma}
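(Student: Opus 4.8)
The plan is to read the hypothesis $g(\omega t^{-1})+g(t)=0$ as an \emph{anti-invariance} condition under an order-two automorphism of $\C(t)$, and thereby reduce the classification to the trivial task of writing down a general rational function of one variable. First I would introduce the $\C$-algebra automorphism $\sigma$ of $\C(t)$ determined by $\sigma(t)=\omega t^{-1}$. It is an involution, since $\sigma^2(t)=\omega/(\omega t^{-1})=t$, and the hypothesis is exactly $\sigma(g)=-g$. The key structural input is the fixed field: setting $u:=t+\omega t^{-1}$ one has $\sigma(u)=u$, and $t$ satisfies $t^2-ut+\omega=0$ over $\C(u)$, so $[\C(t):\C(u)]\le 2$. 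As $\sigma$ is a nontrivial automorphism fixing $\C(u)$, its fixed field has index $2$, forcing $\C(t)^{\sigma}=\C(u)$.

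The crux of the argument is to trivialize the $(-1)$-eigenspace of $\sigma$ by dividing out one explicit anti-invariant function. The element $v:=t-\omega t^{-1}=(t^2-\omega)t^{-1}$ satisfies $\sigma(v)=-v$ and $v\neq 0$. Hence, for any $g$ with $\sigma(g)=-g$, the quotient $h:=g/v$ satisfies $\sigma(h)=(-g)/(-v)=h$, so $h\in\C(t)^{\sigma}=\C(u)$. In other words, every such $g$ has the form $g=v\,h(u)$ with $h$ an \emph{arbitrary} rational function of the single variable $u$. This collapses the entire problem to factoring a one-variable rational function and re-expressing it in $t$.

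It then remains to translate $g=v\,h(u)$ into the displayed product. Since $\C$ is algebraically closed I would factor $h=c_0\prod_{i=1}^{l}(u-c_i)\big/\prod_{j=1}^{m}(u-e_j)$ and use the identity
\[
u-\gamma=\frac{t^2-\gamma t+\omega}{t}=\frac{(t-\l)(\l t-\omega)}{\l t},
\]
where $\l\in\C^*$ is either root of $t^2-\gamma t+\omega$ (the companion root is $\omega/\l$, which yields the factor $\l t-\omega$ after clearing $\l^{-1}$). Substituting this for each factor $u-c_i$ (producing $(t-\l_i)(\l_i t-\omega)/(\l_i t)$) and each $u-e_j$ (producing $(t-\mu_j)(\mu_j t-\omega)/(\mu_j t)$), and multiplying by $v=(t^2-\omega)/t$, all the denominators $\l_i t$ and factors $\mu_j t$ combine into a single power of $t$. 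Collecting these powers gives
\[
g=c\,t^{\,m-l-1}(t^2-\omega)\frac{\prod_{i=1}^{l}(t-\l_i)(\l_i t-\omega)}{\prod_{j=1}^{m}(t-\mu_j)(\mu_j t-\omega)}
\]
for a nonzero constant $c$; setting $k=m-l-1$ reproduces the asserted form with the relation $m=l+k+1$ built in automatically.

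The genuinely delicate point is not any single computation but ensuring the second step is airtight: the identification $\C(t)^{\sigma}=\C(u)$ and, with it, the claim that $h=g/v$ actually lands in $\C(u)$ rather than merely being $\sigma$-invariant in some weaker sense; this is precisely where the degree-two extension argument is needed. The final bookkeeping is routine, but I would watch two things. First, the nonzero scalar $c$ is not displayed in the statement and must be understood as implicit in the phrase ``of the following form'' (or absorbed into the factors). Second, the roots $\l_i,\mu_j$ are permitted to coincide and to equal $\pm\sqrt{\omega}$, the fixed points of $\sigma$; in the latter case the corresponding pair $(t-\l_i)(\l_i t-\omega)$ degenerates to a multiple of $(t-\sqrt{\omega})^2$ and simply merges with the distinguished factor $t^2-\omega$, which is consistent since anti-invariance forces $g$ to have odd order at each fixed point. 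A more hands-on alternative—pairing the zeros and poles of $g$ as $\{a,\omega/a\}$ via the divisor and treating $0,\infty,\pm\sqrt{\omega}$ separately—also yields the result but is considerably messier, so I would favor the fixed-field route above.
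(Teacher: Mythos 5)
Your proof is correct, but it takes a genuinely different route from the paper's. The paper argues directly on the factorization: it writes $g=g_1/g_2$, observes that for $\l\neq 0,\pm\sqrt{\omega}$ the zeros (resp.\ poles) of $g$ are stable under $\l\mapsto\omega\l^{-1}$, factors $g_1$ and $g_2$ as $t^{r_0}(t-\sqrt{\omega})^{r_1}(t+\sqrt{\omega})^{r_2}\prod_i(t-\l_i)(\l_i t-\omega)$ (and similarly with $s_j,\mu_i$), and then substitutes into the functional equation to get a constraint of the form $1+(-1)^{r_1-s_1}t^{N}\omega^{M}=0$, which forces $r_1-s_1$ and $r_2-s_2$ odd and $2r_0+r_1+r_2+2l=2s_0+s_1+s_2+2m$; the stated form is then read off. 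This is precisely the ``hands-on divisor-pairing'' alternative you mention and set aside at the end. Your fixed-field argument replaces all of that bookkeeping with structure: since $\sigma(t)=\omega t^{-1}$ is an involution with fixed field exactly $\C(u)$, $u=t+\omega t^{-1}$ (your index-$2$ argument for this is sound), and $v=t-\omega t^{-1}$ is a nonzero anti-invariant, every anti-invariant is $v\cdot h(u)$ with $h\in\C(u)$ arbitrary; the identity $u-\gamma=(t-\l)(\l t-\omega)/(\l t)$ then converts a factorization of $h$ into the displayed form, with the parity behaviour at $\pm\sqrt{\omega}$ and the relation $m=l+k+1$ emerging automatically rather than being extracted from exponent equations. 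What your approach buys is conceptual economy and a clean parametrization of \emph{all} solutions (the map $h\mapsto vh(u)$ is a bijection onto the anti-invariants); what the paper's buys is elementarity---no field theory, just polynomial factorization---at the cost of case analysis at $0,\infty,\pm\sqrt{\omega}$ and some WLOG normalizations. Note also that both proofs actually produce a nonzero multiplicative constant in front of the displayed expression (the paper silently drops it in its ``without loss of generality'' normalization of $g_1,g_2$, and reinstates it as the constant $b$ when the lemma is applied in the theorem on $A_{\a,\b}$); your explicit flagging of this discrepancy with the lemma's wording is accurate.
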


\begin{proof} Suppose $g(t)=g_1(t)/g_2(t)$ for some nonzero polynomials $g_1(t), g_2(t)\in\C[t]$ which do not
have common divisors other than $t$ and $t\pm \sqrt{\omega}$, where $\sqrt{\omega}$ is a fixed square root of $\omega$.
Then we have
\begin{equation}\label{n=-1 1}
\frac{g_1(\omega t^{-1})}{g_2(\omega t^{-1})}+\frac{g_1(t)}{g_2(t)}=0\quad \text{and}
\quad \frac{g_2(\omega t^{-1})}{g_1(\omega t^{-1})}+\frac{g_2(t)}{g_1(t)}=0.
\end{equation}
For any $\l\in\C^*$ with $\lambda\neq \pm \sqrt{\omega}$, we see that $g_1(\l)=0$ if and only if $g_1(\omega \l^{-1})=0$, that is, $t-\l$ divides
$g_1(t)$ if and only if $t-\omega \l^{-1}$ divides $g_1(t)$. Similarly, $t-\l$ divides
$g_2(t)$ if and only if $t-\omega \l^{-1}$ divides $g_2(t)$.
Then, without loss of generality, we may suppose
$$g_1(t)=t^{r_0}(t-\sqrt{\omega})^{r_1}(t+\sqrt{\omega})^{r_2}\prod_{i=1}^l(t-\l_i)(\l_i t-\omega)$$
and
$$g_2(t)=t^{s_0}(t-\sqrt{\omega})^{s_1}(t+\sqrt{\omega})^{s_2}\prod_{i=1}^m(t-\mu_i)(\mu_i t-\omega),$$
where $r_i, s_i\in\Z_+, i=0,1,2$ and all $\l_i, \omega\l_i^{-1}, \mu_j, \omega\mu_j^{-1}\in\C^*, i=1,\cdots,l, j=1,\cdots,m$ are distinct.
It is easy to check that
$$\frac{g_1(\omega t^{-1})}{g_2(\omega t^{-1})}=
\frac{(-1)^{r_1}t^{-2r_0-r_1-r_2-2l}(\sqrt{\omega})^{2r_0+r_1+r_2+2l}}{(-1)^{s_1}t^{-2s_0-s_1-s_2-2m}(\sqrt{\omega})^{2s_0+s_1+s_2+2m}}\frac{g_1(t)}{g_2(t)},$$
which, by Eq.~\eqref{n=-1 1}, implies that
\begin{equation}\label{n=-1 2}
1+\frac{(-1)^{r_1}t^{-2r_0-r_1-r_2-2l}(\sqrt{\omega})^{2r_0+r_1+r_2+2l}}{(-1)^{s_1}t^{-2s_0-s_1-s_2-2m}(\sqrt{\omega})^{2s_0+s_1+s_2+2m}}=0.
\end{equation}
This is equivalent to the following conditions:
$$r_1-s_1, r_2-s_2\in2\Z+1,\quad 2r_0+r_1+r_2+2l=2s_0+s_1+s_2+2m.$$
The statement of this lemma follows easily.
\end{proof}

We next prove another preliminary result.

\begin{lemma}
Let $(A_{\a,\b},\delta)$ be a $1$-differential module over $(\Vir, d_{n,a})$.
Suppose $\D:=\delta+\id_{A_{\a,\b}}$ is nonzero. Then
$\D(t^if)-a^it^{ni}\D(f)=0$ for all $i\in\Z$ and $f\in A_{\a,\b}$.
\label{lem:delta}
\end{lemma}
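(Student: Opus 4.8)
The plan is to reduce the $1$-differential module identity to a functional equation, split off its degree-zero part, and then combine the resulting annihilation relation with the multiplicative structure of $A_{\a,\b}$. First, since $C$ acts as zero on $A_{\a,\b}$, the operator $D=d_{n,a}+\id_{\Vir}=\varphi_n\tau_a$ satisfies $D(L_i)\D(f)=\frac{a^i}{n}\big(L_{ni}-\frac{n^2-1}{24}\delta_{i,0}C\big)\D(f)=\frac{a^i}{n}L_{ni}\D(f)$, so Lemma~\ref{1-diff-mod} rewrites the hypothesis as $\D(L_i f)=\frac{a^i}{n}L_{ni}\D(f)$ for all $i\in\Z$, $f\in A_{\a,\b}$. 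Writing $L_i f=(\p+\a+i\b)(t^i f)$ and $L_{ni}\D(f)=(\p+\a+ni\b)(t^{ni}\D(f))$ and setting $N:=\p+\a=L_0$, the case $i=0$ gives the intertwining relation $\D N=\frac1n N\D$. Throughout I abbreviate $u_{i,f}:=\D(t^i f)-a^i t^{ni}\D(f)$; the assertion of the lemma is precisely that $u_{i,f}=0$ for all $i,f$.

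Next I would extract two identities. Feeding the expressions for $L_i$ and $L_{ni}$ into the functional equation and using $\D N=\frac1n N\D$ to carry the differential part $\p+\a$ across $\D$, every surviving term is multiplicative and collects into
\[
(N+ni\b)\,u_{i,f}=0,\qquad i\in\Z,\ f\in A_{\a,\b};
\]
thus each $u_{i,f}$ lies in the $N$-eigenspace $E_{-ni\b}:=\ker(N+ni\b)$. Separately, expanding $\D(t^{i+j}f)$ by grouping $t^{i+j}f=t^i(t^j f)$ and unfolding the definition of $u$ twice yields the cocycle identity
\[
u_{i+j,f}=u_{i,t^j f}+a^i t^{ni}u_{j,f},\qquad i,j\in\Z,\ f\in A_{\a,\b}.
\]

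The crux, and the step I expect to be the main obstacle, is to upgrade ``$u_{i,f}\in E_{-ni\b}$'' to ``$u_{i,f}=0$'': the eigenspace need not be zero (for special $\a$, Lemma~\ref{partial} shows it can be one-dimensional, spanned by a monomial $c\,t^{m_0}\prod_j (t-a_j)^{m_j}$), so the first-order annihilation relation alone is not enough. I would resolve this by reading the cocycle through the eigenvalues of $N$. Since $u_{j,f}\in E_{-nj\b}$ and multiplication by $t^{ni}$ raises the $N$-eigenvalue by $ni$, the three terms $u_{i+j,f}$, $u_{i,t^j f}$ and $a^i t^{ni}u_{j,f}$ lie in $E_{-n(i+j)\b}$, $E_{-ni\b}$ and $E_{ni-nj\b}$ respectively. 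Because eigenspaces attached to distinct eigenvalues form a direct sum, as soon as one of these three eigenvalues differs from the other two the corresponding term is forced to vanish. Fix a target index $k\neq0$. If $\b\neq0$, I apply the cocycle with $(i,j)=(k,j)$ and choose $j\neq0$ with $(j-k)\b\neq k$; then $E_{-nk\b}$ is separated from $E_{-n(k+j)\b}$ and $E_{nk-nj\b}$, so $u_{k,t^j f}=0$ for all $f$, and since $t^j$ is a unit in $A_{\a,\b}$ this gives $u_{k,g}=0$ for all $g$. If $\b=0$, I instead apply the cocycle with $(i,j)=(i,k)$ for any $i\neq0$; the last term then carries the isolated eigenvalue $ni\neq0$, so $a^i t^{ni}u_{k,f}=0$ and hence $u_{k,f}=0$. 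The case $k=0$ is trivial, so $u_{i,f}=0$ in all cases. The genuine difficulty here is conceptual rather than computational: one must recognize that it is the multiplicative cocycle, not the annihilation relation, that removes the residual one-dimensional ambiguity permitted by a nonvanishing kernel of $N+ni\b$.
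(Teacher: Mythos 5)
Your proof is correct, and while it shares the paper's skeleton, its decisive step runs along a genuinely different line. Both you and the paper reduce the differential-module condition to $\D(L_if)=\frac{a^i}{n}L_{ni}\D(f)$, use the $i=0$ instance to carry $\p+\a$ across $\D$, arrive at the annihilation relation $(\p+\a+ni\b)\,u_{i,f}=0$ (the paper's Eq.~\eqref{D(tf)}), and exploit the cocycle identity $u_{i+j,f}=u_{i,t^jf}+a^it^{ni}u_{j,f}$, which in the paper appears implicitly when Eqs.~\eqref{c} and \eqref{c'} are combined. The divergence is in how the cocycle forces $u=0$. The paper solves the annihilation relation explicitly via Lemma~\ref{partial}, writing $u_{i,f}=c_i\,t^{-ni\b}g(t)$ with $g$ determined by $\a$ alone, and then compares coefficients of powers of $t$ in the cocycle, splitting into the three cases $\b\neq0,-1$, $\b=0$, $\b=-1$. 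You instead read the annihilation relation as membership in the eigenspace $E_{-ni\b}$ of $N=L_0=\p+\a$, observe that multiplication by $t^{ni}$ shifts $N$-eigenvalues by $ni$ (valid because $\a$ acts by multiplication, hence commutes with $t^{ni}$), and invoke the general fact that a vanishing sum of vectors from distinct eigenspaces has all summands zero, so whichever term of the cocycle carries an isolated eigenvalue must vanish; your two cases ($\b\neq0$ with $j\neq 0$ and $(j-k)\b\neq k$; $\b=0$ with $i\neq0$) cover everything, including the paper's separate $\b=-1$ case, which under your choice of indices needs no special treatment. What each approach buys: yours is leaner and more conceptual, requiring no classification of solutions of $\p(h)=gh$ in $A_{\a,\b}$, only linear algebra for the operator $L_0$, and it compresses the case analysis; the paper's is more explicit (it shows en route that $\ker(\p+\a+ni\b)$ is spanned by the single element $t^{-ni\b}g(t)$) and it reuses Lemma~\ref{partial}, which the paper needs again anyway to determine $\D(1)$ in the proof of Theorem~\ref{Ab}. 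Two small points your writeup handles correctly and should keep explicit: admissible $j$ exist because only finitely many integers are excluded, and cancelling $t^{ni}$ in the $\b=0$ case is legitimate because $A_{\a,\b}\subseteq\C(t)$ is an integral domain.
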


\begin{proof}
By the assumptions, we have
$\D(L_if)=D(L_i)\D(f)$, that is,
\begin{equation}\label{Ab-1}
\D((\a+\p+i\b)t^if)=\frac{a^i}{n}L_{ni}\D(f)=\frac{a^i}{n}(\a+\p+ni\b)t^{ni}\D(f),\ \forall\ i\in\Z, f\in A_{\a,\b}.
\end{equation}
Taking $i=0$, we have $\D((\a+\p)f)=\frac{1}{n}(\a+\p)\D(f)$ and in particular, $\D((\a+\p)t^if)=\frac{1}{n}(\a+\p)\D(t^if)$. Substituting
it back into Eq.~\eqref{Ab-1}, we obtain
\begin{equation*}
\frac{1}{n}(\a+\p)\D(t^if)+i\b\D(t^if)=\frac{a^i}{n}(\a+\p+ni\b)t^{ni}\D(f),\ \forall\ i\in\Z, f\in A_{\a,\b}.
\end{equation*}
Simplifying it we get
\begin{equation}\label{D(tf)}
\p\big(\D(t^if)-a^it^{ni}\D(f)\big)=-(\a+ni\b)\big(\D(t^if)-a^it^{ni}\D(f)\big),\ \forall\ i\in\Z, f\in A_{\a,\b}.
\end{equation}

For any $f\in A_{\a,\b}$ and any $i,j\in\Z$, by Lemma \ref{partial}, we see that
$$\aligned
& \D(t^if)-a^it^{ni}\D(f)=c_it^{l_0-ni\b}\prod_{s=1}(t-a_s)^{l_s},\\
& \D(t^jf)-a^jt^{nj}\D(f)=c_jt^{l'_0-nj\b}\prod_{s=1}(t-a_s)^{l'_s}.
\endaligned$$
Then Eq.~\eqref{D(tf)} implies
$$-(\a+ni\b)=l_0-ni\b+\sum_{s=1}^r\frac{tl_s}{t-a_s},\quad -(\a+nj\b)=l'_0-nj\b+\sum_{s=1}^r\frac{tl'_s}{t-a_s},$$
forcing $l_s-l_s'=0$ for $s=0,\cdots,r$ and $\a=l_0+\sum_{s=1}^r\frac{tl_s}{t-a_s}$. Denote $g(t)=t^{l_0}\prod_{s=1}^r(t-a_s)^{l_s}$, which is determined by $\a$. We see
\begin{equation}\label{c}
\D(t^if)-a^it^{ni}\D(f)=c_it^{-ni\b}g(t),\quad \D(t^jf)-a^jt^{nj}\D(f)=c_jt^{-nj\b}g(t).
\end{equation}
Similarly, we get
\begin{equation}\label{c'}
\D(t^{j-i}t^if)-a^{j-i}t^{n(j-i)}\D(t^if)=c't^{-n(j-i)\b}g(t),
\end{equation}
for some $c'\in\C$. Combining Eqs.~\eqref{c} and \eqref{c'}, we deduce
$$c_jt^{-nj\b}=c't^{-n(j-i)\b}+c_ia^{j-i}t^{n(j-i)}t^{-ni\b},\ \forall\ i,j\in\Z.$$
If $\b\neq0,-1$, we find $c_j=0$ by taking $i$ sufficiently large and hence the claim follows from
Eq.~\eqref{c}.
If $\b=0$, taking $j\neq i$, we see $c_i=0$ and the claim also holds. Finally, if $\b=-1$, taking
$i\neq0$, we obtain $c'\neq0$ and Eq.~\eqref{c'} also proves the lemma.
\end{proof}

We can now give the classification of $1$-differential structures on $A_{\a,\b}$.

\begin{theorem}\label{Ab} Let $a, n, \a,\b$ and $A_{\a,\b}$ be as above.
Let $\d: A_{\a,\b}\rightarrow A_{\a,\b}$ be a linear map.
Then $(A_{\a,\b}, \d)$ is a $1$-differential $(\Vir, d_{n,a})$-module
if and only if one of the following conditions holds
\begin{enumerate}
\item $n=1$, $a$ is a primitive root of unity of degree $d$, and
         $$A_{\a,\b}=\C[t^{\pm1}, (t-a_ia^j)^{-1}, i=1,\cdots,s; j=1,\cdots,d]$$
    for some $a_i$ such that all $a_{i}a^j$ are distinct, and $\a(t)$ is a sum of
        $$\a_0(t)=\sum_{i=1}^s\sum_{j=1}^{d}a_{i}(m_{i0}+m_{i1}+\cdots+m_{i,j-1})(a^{-j}t-a_i)^{-1}$$
    where $m_{ij}\in\Z$ with $\sum_{j=0}^{d-1}m_{ij}=0$ for all $i=1,\cdots,s$, and is an element in $A^\omega$ as defined in Lemma \ref{n=1} (taking $A=A_{\a,\b}$ and $\omega=a$). In this case, we have
        $$\delta(f)(t)=cf(at)\prod_{i=1}^{s}\prod_{j=0}^{d-1}(t-a_{i}a^j)^{m_{ij}}-f(t),\ \forall\ f\in A_{\a,\b},$$
    for some $c\in\C^*$.
\item $n=-1$ and $A_{\a,\b}=\C[t^{\pm1}, (t-a_i)^{-1}, (t-a_i^{-1}a)^{-1}, i=1,\cdots,s]$ for some $a_i$ such that
      all $a_i$ are distinct (maybe $a_i=a_j^{-1}a$), and $\a(t)$ is a sum of
        $$\a_0(t)=-\frac{m_0}{2}-\frac{m_i}{2}\sum_{i=1}^s\frac{(a_i^2-a)t}{(t-a_i)(a_it-a)},$$
     where $m_i\in\Z$, and is an elements in $A_{\a,\b}$ of the form
         $$b\frac{t^{k}(t^2-a)\prod_{i=1}^l(t-\l_i)(\l_i t-a)}{\prod_{i=1}^{m}(t-\mu_i)(\mu_i t-a)},\ b\in\C,$$
     where $k\in\Z, m,l\in\N$ with $m=l+k+1$, and $\l_i, \mu_i\in\C^*$ with each $\mu_i=a_j$ or $\mu_i=a_j^{-1}a$ for some $1\leq j\leq s$.
     In this  case, we have
        $$\delta(f)(t)=cf(at^{-1})t^{m_0} \prod_{i=1}^{s}(t-a_{i})^{m_{i}}(t-a_i^{-1}a)^{-m_i}-f(t),\ \forall\ f\in A_{\a,\b},$$
            for some $c\in\C^*$.
     \end{enumerate}
\end{theorem}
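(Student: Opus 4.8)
The plan is to establish both implications, the substance lying in the ``only if'' direction; throughout we assume $\D:=\d+\id_{A_{\a,\b}}\neq0$, the case $\D=0$ giving the trivial structure $\d=-\id$. The first step is to pin down the shape of $\D$. Lemma~\ref{lem:delta} provides $\D(t^if)=a^it^{ni}\D(f)$ for all $i\in\Z$ and $f\in A_{\a,\b}$. Writing $h:=\D(1)$ and feeding the algebra identities $t^m=t^m\cdot1$ and $t(t-a_j)^{-k}=(t-a_j)^{-(k-1)}+a_j(t-a_j)^{-k}$ into this relation, a short induction gives $\D(t^m)=a^mt^{nm}h$ and $(at^n-a_j)\D((t-a_j)^{-k})=\D((t-a_j)^{-(k-1)})$, hence $\D((t-a_j)^{-k})=h/(at^n-a_j)^k$. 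By linearity these assemble into the single closed form
\begin{equation*}
\D(f)(t)=f(at^n)\,h(t),\qquad f\in A_{\a,\b}.
\end{equation*}
Since $\D$ preserves $A_{\a,\b}$, every $h/(at^n-a_j)^k$ lies in $A_{\a,\b}$.

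The conceptual heart is to force $n\in\{1,-1\}$. For $n\ge1$ the polynomial $at^n-a_j$ has $n$ distinct nonzero roots, the $n$-th roots of $a_j/a$; taking $k$ large, $h/(at^n-a_j)^k$ acquires poles of unbounded order at each of them, so all these roots must lie in the finite set $S=\{a_1,\dots,a_r\}$. Because $\zeta\mapsto a\zeta^n$ separates the root sets of distinct $a_j$, these $r$ root sets of size $n$ are pairwise disjoint inside $S$, forcing $rn\le r$, i.e. $n=1$; the mirror computation using $at^n-a_j=-a_jt^{n}(t^{-n}-a/a_j)$ for $n\le-1$ gives $n=-1$. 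The same ``poles land in $S$'' principle then shows $S$ is closed under $a_j\mapsto a_j/a$ when $n=1$ and under the involution $a_j\mapsto aa_j^{-1}$ when $n=-1$; finiteness of $S$ forces $a$ to be a primitive $d$-th root of unity in the first case, so $S$ breaks into $a$-orbits $\{a_ia^j\}$, while the involution itself produces the pairing $S=\{a_i,\,aa_i^{-1}\}$ in the second. These are exactly the pole patterns of parts (1) and (2).

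The third step fixes $\a$ and $h$. Evaluating the $i=0$ case of \eqref{Ab-1}, namely $\D((\a+\p)f)=\tfrac1n(\a+\p)\D(f)$, at $f=1$ and using $\D(\a)=\a(at^n)h$, yields the functional equation
\begin{equation*}
n\,\a(at^n)-\a(t)=\frac{t\,h'(t)}{h(t)}.
\end{equation*}
Its left side, call it $R(t)$, has poles only at the nonzero points of $S$ and is regular at the origin, so $h'/h=R/t$ has only simple poles with integer residues confined to $S\cup\{0\}$; hence $h=c\,t^{m_0}\prod_s(t-a_s)^{m_s}$ with $c\in\C^*$ and integer exponents, and $m_0=0$ when $n=1$ because $\a$ is regular at $0$ and so $R$ vanishes there. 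For $n=1$ the equation reads $\a(at)-\a(t)=\sum_{i,j}m_{ij}\,t/(t-a_ia^j)$ with $h=c\prod(t-a_ia^j)^{m_{ij}}$; telescoping this identity around an $a$-orbit (using $a^d=1$) collapses the left side to $0$ and extracts the solvability constraint $\sum_jm_{ij}=0$, the homogeneous solutions of $\a(at)=\a(t)$ being catalogued by Lemma~\ref{n=1} and $\a_0$ being the displayed particular solution. The case $n=-1$ runs identically with Lemma~\ref{n=-1} replacing Lemma~\ref{n=1}: a factor $t^{m_0}$ now survives in $h$, the homogeneous equation is $\a(at^{-1})+\a(t)=0$, and $\a_0$ is its particular solution.

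For the converse I would run this backwards: given the stated data, set $\D(f)=f(at^n)h$ and $\d=\D-\id$. The closure of $S$ guarantees $f(at^n)\in A_{\a,\b}$, so $\D$ preserves $A_{\a,\b}$; moreover, for $\D$ of this product form the full module identity \eqref{Ab-1} is \emph{equivalent} to the single functional equation above, which holds by the construction of $h$ from $\a$, so Lemma~\ref{1-diff-mod} makes $(A_{\a,\b},\d)$ a $1$-differential module. I expect the main obstacle to be the bookkeeping of the third step: solving the functional equation inside $A_{\a,\b}$, separating particular from homogeneous solutions, and matching the residue data to the explicit shapes of Lemmas~\ref{n=1} and~\ref{n=-1}, including the verification that the stated $\a_0$ is genuinely a particular solution and the derivation of $\sum_jm_{ij}=0$. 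The $n=\pm1$ dichotomy, though brief, is the other delicate point, being precisely what confines the classification to these two families.
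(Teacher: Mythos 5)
Your skeleton is the same as the paper's: Lemma~\ref{lem:delta} yields the closed form $\D(f)=f(at^n)h$ with $h=\D(1)$, Eq.~\eqref{Ab-1} then collapses to the single functional equation $n\a(at^n)-\a(t)=\p(h)/h$, one forces $n=\pm1$, and Lemmas~\ref{n=1} and~\ref{n=-1} finish the classification of $\a$ and $h$. The problem is in the step you yourself call the conceptual heart. Your argument forcing $n=\pm1$ counts the roots of $at^n-a_j$ inside the pole set $S=\{a_1,\dots,a_r\}$ and concludes via $r|n|\le r$; this presupposes $r\ge1$. But the standing hypothesis is only $\a\in\C(t)\setminus\C$, so $\a$ may have no poles in $\C^*$ at all (e.g.\ $\a=t$), in which case $A_{\a,\b}=\C[t^{\pm1}]$, the set $S$ is empty, there are no elements $(t-a_j)^{-k}$ to feed into $\D$, and your inequality reads $0\le 0$: it excludes nothing. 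This case genuinely occurs in the theorem --- it is exactly the $s=0$ instance of parts (1) and (2) --- and the paper devotes a separate argument to it (Case 1 of its proof). As written, your proof cannot rule out a nonzero $1$-differential structure on $\C[t^{\pm1}]$ over $(\Vir,d_{n,a})$ with $|n|\ge 2$, nor does it show that $a$ must be a root of unity there when $n=1$.

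The missing argument has to come from the functional equation itself, by a mechanism different from pole-counting (there are no poles to count). When $r=0$ one has $h=ct^{m_0}$, so $n\a(at^n)-\a(t)=m_0$ is a constant; writing $\a=\sum_kb_kt^k$ and comparing coefficients of $t^{nk}$, any $k\neq0$ with $b_k\neq0$ gives $b_{nk}=nb_ka^k\neq0$, hence $b_{n^2k}\neq0$, and so on; for $|n|\ge2$ the exponents $k,nk,n^2k,\dots$ are pairwise distinct, contradicting that $\a$ is a Laurent polynomial. The same comparison with $n=1$ gives $a^k=1$ whenever $b_k\neq0$, whence $a$ is a primitive $d$-th root of unity, $\a\in\C[t^{\pm d}]$ and $m_0=0$, recovering part (1) with $s=0$; with $n=-1$ it recovers part (2). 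A smaller inaccuracy: for $n=-1$ the exponent constraint $m_i'=-m_i$ on $h$ (which is what produces the displayed form of $\d$ in part (2)) is not obtained by a telescoping sum ``collapsing the left side to $0$'': the substitution $t\mapsto at^{-1}$ \emph{fixes} the left side $-\a(at^{-1})-\a(t)$, it does not telescope it. One must instead use the resulting invariance $[\p(h)/h](t)=[\p(h)/h](at^{-1})$, which integrates to $h(t)h(at^{-1})=\mathrm{const}$ and then forces $m_i+m_i'=0$, as in the paper's Subcase 2.2; your ``runs identically'' passes over precisely this point.
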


\begin{proof} Let $(A_{\a,\b},\delta)$ be a $1$-differential module over $(\Vir, d_{n,a})$.
Denote $\D=\delta+\id_{A_{\a,\b}}$ and suppose $\D\neq0$ as before.

By Lemma~\ref{lem:delta}, we have $\D(t^if)-a^it^{ni}\D(f)=0$, that is,
$\D(t^if)=a^it^{ni}\D(f)$ for all $f\in A_{\a,\b}$.
By linearity we easily deduce
\begin{equation}\label{Ab-2}
\D(gf)=g(at^n)\D(f),\ \forall\ g\in\C[t^{\pm1}], f\in A_{\a,\b}.
\end{equation}
Moreover, for $g_1(t), g_2(t)\in\C[t], f(t)\in A_{\a,\b}$ with $g_2(t)\neq0$ and $g_1(t)f(t)/g_2(t)\in A_{\a,\b}$, we have
$$\Delta(g_1(t)f(t))=g_2(at^n)\Delta\Big(\frac{g_1(t)}{g_2(t)}f(t)\Big)=g_1(at^n)\Delta(f(t)),$$
yielding $\Delta(g_1(t)f(t)/g_2(t))=g_1(at^n)\Delta(f(t))/g_2(at^n)$. That is, Eq.~\eqref{Ab-2} also holds for
$g(t)\in\C(t), f(t)\in A_{\a,\b}$ provided $gf\in A_{\a,\b}$.
As a result, we obtain $\D(f)=f(at^n)\D(1)$ for all $f\in A_{\a,\b}$.
In particular, we have $\D(\a)=\a(at^n)\D(1)$.

For convenience, denote $h(t):=\D(1)\in A_{\a,\b}$. Note
\begin{equation*}\p(\D(f)) = t\frac{d}{dt}(f(at^n)h) = t\frac{df}{du}\big|_{u=at^n}\cdot ant^{n-1}h+tf(at^n)\frac{dh}{dt}
 =n\D(\p(f))+f(at^n)\p(h)
\end{equation*}
for all $f\in A_{\a,\b}$.
Then Eq.~\eqref{Ab-1} is equivalent to
$$\p(h)=(n\a(at^n)-\a(t))h.$$

Recall $A_{\a,\b}=\C[t^{\pm1},(t-a_i)^{-1}, i=1,\cdots,r]$ from Eq.~\eqref{eq:aab}. By Lemma \ref{partial}, we see that $h(t)=ct^{m_0}\prod_{i=1}^{r}(t-a_{i})^{m_{i}}$ for some $c\in\C^*$ and $m_{i}\in\Z$. In particular, $\D(1)=h(t)$ is invertible in $A_{\a,\b}$. We will distinguish two cases.

\smallskip
\noindent\textbf{Case 1.} $r=0$.
\smallskip

We have $A_{\a,\b}=\C[t^{\pm1}]$, $h(t)=ct^j$ for some $c\in\C^*$ and $j\in\Z$ and Eq.~\eqref{Ab-1} is equivalent to
$$n\a(at^n)-\a(t)=j,$$
which can hold only when $n=\pm1$ since $\a\in\C[t^{\pm1}]\setminus\C$. If $n=1$, we have that $a$ is a primitive root of unity
of degree $d\in\N$ and $\a(t)\in\C[t^{\pm d}]$. Moreover, we have $j=0$ and $h(t)=c$.
If $n=-1$, we can deduce that $\a(t)=\frac{j}{2}+\sum_{i=0}^p(b_it^i-b_ia^{i}t^{-i}).$

\smallskip
\noindent\textbf{Case 2.} $r\geq 1$.
\smallskip

Now we suppose that $r\geq 1$.
Then $\D((t-a_i)^{-1})=(at^n-a_i)^{-1}h(t)\in A_{\a,\b}$ implies that $(at^n-a_i)^{-1}\in A_{\a,\b}$
since $h(t)$ is invertible in $A_{\a,\b}$.
By induction, we deduce
$$(a^{1+n+\cdots+n^{k-1}}t^{n^k}-a_i)^{-1}\in A_{\a,\b},\ \forall\ k\in\N, i=1,2,\cdots,r,$$
which implies that the $n^{2k}$-th power roots of $a_i/a^{1+n+\cdots+n^{k-1}}$
all lies in $\{a_1,a_2,\cdots,a_r\}$. This can occur only when $n=\pm1$.

\smallskip
\noindent\textbf{Subcase 2.1.} $r\geq 1$ and $n=1$.
\smallskip

We have $(a^kt-a_i)^{-1}\in\ A_{\a,\b}$ for all $k\in\N$, that is, all $a_i/a^k$ lies in $\{a_1,\cdots,a_r\}$.
Then $a^k=1$ for some $k\in\N$. Let $d$ be the smallest such $k$. It is easy to see that $r=sd$ for some $s\in\N$ and by reordering these $a_1,\cdots,a_r$, we may assume
$$\{a_1,a_2,\cdots,a_r\}=\{a_ia^j\ |\ i=1,\cdots,s; j=1,\cdots, d\}.$$
Now suppose that $h(t)=ct^{m_0}\prod_{i=1}^{s}\prod_{j=1}^{d}(t-a_{i}a^j)^{m_{ij}}$ for some $c\in\C$ and $m_{ij}\in\Z$. Then we have
$$\a(at)-\a(t)=\frac{\p(h)}{h}=m_0+\sum_{i=1}^s\sum_{j=1}^{d}\frac{m_{ij}t}{t-a_ia^j}=m_0+\sum_{i=1}^s\sum_{j=1}^{d}m_{ij}+\sum_{i=1}^s\sum_{j=1}^{d}\frac{m_{ij}a_{i}a^{j}}{t-a_{i}a^{j}}$$
Write $\a(t)$ as a combination of the basis elements $1, t^k, t^{-k}, (t-a_ia^j)^{-k}, k\in\N, i=1,\cdots,s; j=1,\cdots, d$. By easy computations,
we can deduce
\begin{equation}\label{alpha}\begin{split}
\a(t)= & \sum_{k\in d\Z}b_kt^{k}+\sum_{i=1}^s\sum_{j=1}^{d}\big(b'_{i}+a_{i}(m_{i0}+m_{i1}+\cdots+m_{i,j-1})\big)(a^{-j}t-a_i)^{-1}\\
       & +\sum_{k\geq 2}\sum_{i=1}^s\sum_{j=1}^{d}b''_{k,i}(a^{-j}t-a_i)^{-k},
\end{split}\end{equation}
where $m_{i0}=m_{id}$ and only finitely many of $b_k, b'_i, b''_{i,k}\in\C$ are nonzero.
Moreover, we have $m_{0}=\sum_{j=1}^{d}m_{ij}=0$ for all $i=1,\cdots,s$ and hence
$$h(t)=c\prod_{i=1}^{s}\prod_{j=1}^{d}(t-a_{i}a^j)^{m_{ij}}.$$
If we denote
\begin{equation*}\begin{split}
\a_0(t)= \sum_{i=1}^s\sum_{j=1}^{d}a_{i}(m_{id}+m_{i1}+\cdots+m_{i,j-1})(a^{-j}t-a_i)^{-1},\\
\end{split}\end{equation*}
then it is straightforward to check that $\a_0(at)-\a_0(t)=\p(h)/h$.
Hence $(\a-\a_0)(at)-(\a-\a_0)(t)=0$, i.e., $\a-\a_0\in A^{\omega}$ as defined in Lemma \ref{n=1} (taking $A=A_{\a,\b}$ and $\omega=a$), and $\a$ is of the form
as in Eq.~\eqref{alpha}. The result follows in this case.

\smallskip
\noindent\textbf{Subcase 2.2.} $r\geq 1$ and $n=-1$.
\smallskip

If $n=-1$, then we have $(at^{-1}-a_i)^{-1}=-t(a_it-a)^{-1}\in\ A_{\a,\b}$, forcing $a_i^{-1}a=a_j$ for some $1\leq j\leq r$.
Similarly, reordering these $a_1,\cdots,a_r$, we may assume
$$\{a_1,a_2,\cdots,a_r\}=\{a_i, a_i^{-1}a\ |\ i=1,\cdots,s\}.$$
Now suppose that $h(t)=ct^{m_0}\prod_{i=1}^{s}(t-a_i)^{m_i}(t-a_i^{-1}a)^{m'_{i}}$ for some $c\in\C^*$ and $m_{i},m'_i\in\Z$.
Then we have
$$-\a(at^{-1})-\a(t)=\frac{\p(h)}{h}=\frac{th'(t)}{h(t)}=t(\ln(h(t)))',$$
where $h'(t)$ and $(\ln(h(t)))'$ refer to the derivative of $h(t)$ with respect to $t$.
Replacing $t$ by $at^{-1}$, we get
$$-\a(t)-\a(at^{-1})=at^{-1}\frac{h'(at^{-1})}{h(at^{-1})}=t\frac{at^{-2}h'(at^{-1})}{h(at^{-1})}=-t(\ln(h(at^{-1})))'.$$
This indicates that $\ln(h(t))+\ln(h(at^{-1}))$ is a constant, or equivalently, $h(t)h(at^{-1})$ is a nonzero constant. More precisely,
we see that
$$t^{m_0}\prod_{i=1}^{s}(t-a_i)^{m_i}(t-a_i^{-1}a)^{m'_{i}}t^{-m_0}\prod_{i=1}^{s}(at^{-1}-a_i)^{m_i}(at^{-1}-a_i^{-1}a)^{m'_{i}}$$
is a nonzero constant, forcing $m_i+m'_i=0$ for all $i=1,\cdots,s$.
Denoting
$$\a_0(t)=-\frac{m_0}{2}-\frac{m_i}{2}\sum_{i=1}^s\frac{(a_i^2-a)t}{(t-a_i)(a_it-a)},$$
we have
$$-\a_0(at^{-1})-\a_0(t)=\frac{\p(h)}{h}=m_0+\sum_{i=1}^s\left(\frac{m_{i}t}{t-a_{i}}-\frac{m_it}{t-a_i^{-1}a}\right).$$
Hence $(\a-\a_0)(at^{-1})+(\a-\a_0)(t)=0$. By Lemma \ref{n=-1}, we can give a description of $\a(t)$, that is, $\a(t)$ can be written as the sum of $\a_0(t)$ and  an element in $A_{\a,\b}$ of the following form:
$$b\frac{t^{k}(t^2-a)\prod_{i=1}^l(t-\l_i)(\l_i t-a)}{\prod_{i=1}^{m}(t-\mu_i)(\mu_i t-a)},\ b\in\C,$$
where $k\in\Z, l, m\in\N$ with $m=k+l+1$
and all $\l_i, \mu_i\in\C^*$ (possibly not distinct) with $\mu_i, \mu_i^{-1}a\in\{a_1,\cdots,a_r\}$.
This is what we need.

It is straightforward to check that the operators thus obtained give differential modules.
\end{proof}

\noindent
{\bf Acknowledgements:} This research is supported by the National Natural Science Foundation of China (Grant No. 11471294 and 11771190), the China Scholarship Council (No. 201606180088) and the Outstanding Young Talent Research Fund of Zhengzhou University (Grant No. 1421315071). The authors thank the referee for helpful suggestions.

\end{document}